\newtheorem{thm}{Theorem}[section]
\newtheorem{hypothesis}[thm]{Hypothesis}
\newtheorem{lem}[thm]{Lemma}
\newtheorem{cor}[thm]{Corollary}
\newtheorem{prop}[thm]{Proposition}
\newtheorem{defn}[thm]{Definition}
\numberwithin{equation}{section}
\newcommand{\bel}{\begin{equation} \label}
\newcommand{\ee}{\end{equation}}
\def\beq{\begin{equation}}
\def\eeq{\end{equation}}
\newcommand{\bea}{\begin{eqnarray}}
\newcommand{\eea}{\end{eqnarray}}
\newcommand{\beas}{\begin{eqnarray*}}
\newcommand{\eeas}{\end{eqnarray*}}
\newcommand{\pd}{\partial}
\newcommand{\R}{\mathbb{R}}
\newcommand{\A}{\mathcal{A}}
\newcommand{\C}{\mathbb{C}} 
\newcommand{\CI}{\mathcal{C}}
\newcommand{\N}{\mathbb{N}}
\newcommand{\M}{\mathcal{M}}
\newcommand{\g}{\bar{g}}
\newcommand{\Sym}{\mathrm{Sym}}
\renewcommand{\div}{\mathrm{div}\,}  
\newcommand{\supp}{\mathrm{supp}\,}  
\def\epsilon{\varepsilon}
\def\phi {\varphi}
\renewcommand{\leq}{\leqslant}
\renewcommand{\geq}{\geqslant}
\providecommand{\norm}[1]{\left\lVert#1\right\rVert}
\DeclareMathOperator{\Tr}{\textit{Tr}}
\numberwithin{equation}{section}
\title[Hyperbolic Dirichlet to Neumann Map]{Recovery of time dependent coefficients from Boundary Data for hyperbolic equations}
\keywords{Dirichlet to Neumann map, Gaussian beam, inverse problems, lens data, light ray transform, magnetic potential}
\author[]{Ali Feizmohammadi}
\address{Department of Mathematics, University College London, London, UK-WC1E  6BT, United Kingdom}
\email{a.feizmohammadi@ucl.ac.uk}
\author[]{Joonas Ilmavirta}
\address{Department of Mathematics and Statistics, University of Jyv\"{a}skyl\"{a}, P.O. Box 35 (MaD), Finland}
\email{joonas.ilmavirta@jyu.fi}
\author[]{Yavar Kian}
\address{Aix Marseille Universit\'{e}, Universit\'{e} de Toulon, CNRS, CPT, Marseille, France}
\email{yavar.kian@univ-amu.fr}
\author[]{Lauri Oksanen}
\address{Department of Mathematics, University College London, London, UK-WC1E  6BT, United Kingdom}
\email{l.oksanen@ucl.ac.uk}
\begin{document}
\maketitle
\begin{abstract}
We study uniqueness of the recovery of a time-dependent magnetic vector-valued potential and an electric scalar-valued potential on a Riemannian manifold from the knowledge of the Dirichlet to Neumann map of a hyperbolic equation. The Cauchy data is observed on time-like parts of the space-time boundary and uniqueness is proved up to the natural gauge for the problem. The proof is based on Gaussian beams and inversion of the light ray transform on Lorentzian manifolds under the assumptions that the Lorentzian manifold is a product of a Riemannian manifold with a time interval and that the geodesic ray transform is invertible on the Riemannian manifold. 
\end{abstract}

\section{Introduction}
\subsection{Statement of the problem}

Let $(\M,\bar{g})$ be a $1+n$ dimensional Lorentzian manifold with boundary. Throughout this paper, we will assume that $(\M,\g)$ has a global product structure, that is to say $\M=[0,T]\times M$, $\g=-dt^2+g$, where $T>0$ and $(M,g)$ denotes a smooth compact connected Riemannian manifold of dimension $n \geq 2$, with smooth boundary $\partial M$. We assume that $g \in \mathcal C^6(M;\Sym^2 M)$, where $\Sym^2 M$ denotes the bundle of symmetric two-tensors over $M$. We denote by $\Delta_{\g}$ the Laplace-Beltrami operator given by $\Delta_{\g} :=  \div_{\g}\, \, \nabla^{\g}$,
where $\div_{\g}$ (resp., $\nabla^{\g}$) denotes the divergence (resp., gradient) operator on $(\M,\g)$.
In local coordinates $(x^0:=t,x^1,\ldots,x^n)$ and for each $u \in C^{2}(\M)$ we have:
$$ \Delta_{\g} u =\sum_{i,j=0}^n  |\g|^{-1/2}\partial_{x_i}( |\g|^{1/2}{\g}^{ij}\partial_{x_j} u),$$
where $\bar{g}^{-1} :=(\bar{g}^{ij})_{0 \leq i,j \leq n}$ and $|\g| :=|\mbox{det}\ \g|$. As $\g=-dt^2+g$, we have $\Delta_{\g}=-\pd_t^2+\Delta_g$, where $\Delta_g$ is defined analogously. Consider a complex-valued scalar function $q(t,x)$ (electric potential) and a complex-valued one-form $\A$ (magnetic potential), such that the following regularity assumptions are satisfied
\bel{regularity}
q \in \CI(\M) \quad\text{and} \quad \A \in \CI^1(\M;T^*\M).
\ee
In local coordinates, the one-form $\A$ can be expressed as 
\bel{Alocal}\A(t,x)=b(t,x) \,dt+\sum_{i=1}^n\omega_j(t,x)\,dx^j= b(t,x)\,dt+\omega(t,x),\ee
where $\omega$ is a time-dependent one-form on $(M,g)$. Given $\A$ and $q$ as above, We consider the initial boundary value problem (IBVP)
\bel{eq1}
\left\{ \begin{array}{ll}  -\Delta_{\g} u + \A\nabla^{\g} u+ q u  =  0, & \mbox{on}\ \M,\\  
u  =  h, & \mbox{on}\ (0,T)\times \partial M,\\  
 u(0,\cdot)  =0,\quad  \pd_tu(0,\cdot)  =0 & \mbox{on}\ M,
\end{array} \right.
\ee
with non-homogeneous Dirichlet data $ h\in H^1_0((0,T)\times \pd M)$. Note that in local coordinates, $\mathcal{A}\nabla^{\g}u=-b\,\pd_t u+\sum_{i,j=1}^n g^{ij}\omega_i\, \pd_j u$. We introduce $\bar{\nu}$, the outward unit normal vector to $(0,T)\times\partial M$, and define the hyperbolic Dirichlet-to-Neumann (DN in short) map, given by 
\[\Lambda_{\A,q}:h\mapsto (\partial_{\bar{\nu}} u-\frac{(\A\bar{\nu})u}{2})_{|(0,T)\times\partial M}\]
where $u$ solves problem \eqref{eq1}. This is well-defined as equation \eqref{eq1} admits a unique solution 
$u \in \mathcal C(0,T;H^1(M)) \cap \mathcal C^1(0,T;L^2(M)),$ 
with $\pd_{\bar\nu} u|_{(0,T)\times \pd M} \in L^2((0,T)\times \pd M)$. This follows from \cite[Theorem 2.1]{LLT} as explained in Section~\ref{directpr}. The goal of this paper is to study the unique recovery of the complex valued coefficients $\A$ and $q$ given $\Lambda_{\A,q}$, up to the natural obstructions discussed in the next section.
\subsection{Natural obstructions}

The first obstruction concerns the recovery of the magnetic potential $\A$. Indeed, for $j=1,2$, fix $(\A_j,q_j)$ defined as above and assume that there exists $\psi(t,x) \in \mathcal C^2(\M)$ with $\psi|_{(0,T)\times \pd M}=0$, such that
\bel{gauge}\A_1=\A_2+2\bar{d}\psi,\quad q_1=q_2+\Delta_{\g}\psi -\A_2\nabla^{\g}\psi-\left\langle \nabla^{\g}\psi,\nabla^{\g}\psi\right\rangle_{\g},\ee
where $\bar{d}$ denotes the exterior derivative on $\M$, that acts on $f \in C^{\infty}(\M)$ through $\bar{d}f =\pd_t f \,dt + \,df$ with $d$ denoting the exterior derivative on $M$, and $\langle \cdot,\cdot\rangle_{\g}$ denotes the inner product on $(\M,\g)$. Then, for $u_j$ the solution of \eqref{eq1} with $\A=\A_j$, $q=q_j$, $j=1,2$, it holds that $u_1=e^{\psi} u_2$ and, using the fact that $\psi_{|(0,T)\times\partial M}=0$, we obtain 
$$\partial_{\bar{\nu}} u_1-\frac{(\A_1\bar{\nu})u_1}{2}=\partial_{\bar{\nu}} u_2+(\partial_{\bar{\nu}} \psi)u_2-\frac{(\A_1\bar{\nu})u_2}{2}=\partial_{\bar{\nu}} u_2-\frac{(\A_2\bar{\nu})u_2}{2}.$$
This proves that $\Lambda_{\A_1,q_1}=\Lambda_{\A_2,q_2}$, but $\A_1 \neq \A_2$ as soon as $\psi$ does not vanish identically. In other words, the DN map $\Lambda_{\A,q}$ is invariant with respect to the gauge transformation given by \eqref{gauge} and the best we can expect is the recovery of the coefficients $\A$ and $q$ from $\Lambda_{\A,q}$ modulo the gauge invariance \eqref{gauge}. 

The second obstruction to our problem is due to finite speed of propagation for the wave equation. Let us define the set  
$$ \mathcal D=\{(t,x)\in \M\,|\, 
\textrm{dist}(x,\partial M)< t < T - \textrm{dist}(x,\partial M)\}.$$
Due to domain of dependence arguments (see \cite[Section 1.1]{Ki2} and \cite[Section 1.1]{KiOk}), it is not possible to recover the restriction of any of the coefficients $\A$ and $q$ on the set $\M \setminus \mathcal D$ from $\Lambda_{\A,q}$. Therefore, for our problem, the best we can expect, is to recover the coefficients modulo the gauge invariance above, on the set $\mathcal D$. 

\subsection{Main result}
\label{mainresult}
Before stating the main result, we need to recall the definition of the geodesic ray transform on the (spatial) Riemannian manifold $M$. For each $(y,v) \in SM$, with $SM$ denoting the unit tangent bundle of $M$, let $\gamma(\cdot;y,v)$ denote the unit speed geodesic starting at point $y$, in the direction $v$, that is:
$$ \nabla^g_{\dot{\gamma}}\dot{\gamma}(\cdot;y,v)=0, \quad \gamma(0;y,v)=y, \quad \dot{\gamma}(0;y,v)=v.$$
For any $(y,v) \in SM$, we define $\tau_{exit}(y,v)$ through:
$$\tau_{exit}(y,v):=\inf{\{t>0\,|\,\gamma(t;y,v) \in \pd M,\, \dot{\gamma}(t;y,v) \notin T_{\gamma(t;y,v)}\pd M\}}.$$
We now define
\bel{Gamma-}
\pd_-SM:= \{ (y,v) \in SM\,|\, y \in \pd M,\,  \langle v,\nu(y)\rangle_g < 0,\, \tau_{exit}(y,v)<\infty\},
\ee 
where $\nu$ denotes the outward normal unit vector on $\pd M$. Henceforth, for the sake of brevity, we use the term maximal geodesic to refer to the geodesics $\gamma(\cdot;y,v)$ (or $\gamma(\cdot)$ in short) with $(y,v) \in \pd_-SM$, over their maximal interval of definition in $M^{int}$, that is the interval $I:=(0,\tau_{exit})$.

\begin{defn}
\label{geodesicraydef}
Let $(y,v) \in \pd_-SM$ and let $\gamma(\cdot;y,v):I\to M$. We define the geodesic ray transform of $(f,\alpha) \in \CI(M) \times \CI(M;T^*M)$, as follows:
$$  \mathcal{I}_{\gamma}(f,\alpha):= \int_{I} [f(\gamma(t))+ \alpha(\gamma(t))\dot{\gamma}(t)]\,dt.$$
\end{defn}

\noindent We also need to recall the definition of the solenoidal component, $\alpha^s$, of a one-form $\alpha$ with local representation $\alpha=\sum_{k=1}^n \alpha_k \,dx^k$. Let $\delta \alpha:=\sum_{i,j=1}^n\frac{1}{\sqrt{g}}\pd_i(\sqrt{g}g^{ij}\alpha_j)$ denote the divergence operator on $M$ sending one-forms to functions. Given any $\alpha \in L^2(M;T^*M)$, it can be uniquely decomposed as
\bel{}
\alpha=\alpha^s + d\psi,
\ee
where $\delta \alpha^s =0$ and $\psi \in H^1_0(M)$ solves $\Delta_g \psi =\delta \alpha$ in the weak sense on $M$. This is called the Helmholtz decomposition (see for example \cite{Taylor}). We will be working with $\CI^1(M)$ one-forms. In this case, one can immediately see that since $\delta \alpha \in \CI(M) \subset L^{n}(M)$, elliptic regularity implies that $\psi \in W^{2,n}(M) \subset \CI^1(M)$. We will use this observation later in the paper to derive the smoothness properties for the gauge. With these notations, we can now state the main geometric assumption on $(M,g)$. 
\begin{hypothesis}
\label{ginjectivity}
Let $(M,g)$ be a compact connected Riemannian manifold with smooth boundary. We say that the geodesic ray transform is injective on $M$ with respect to functions $f \in \CI(M)$ and one-forms $\alpha \in \CI(M;T^*M)$, if the following holds:
$$ \mathcal{I}_{\gamma(\cdot;y,v)} (f,\alpha) =0, \quad \forall (y,v) \in \pd_-SM \quad \text{implies that} \quad f \equiv 0 \quad \text{and} \quad \alpha^s \equiv 0.$$ 
\end{hypothesis}

According to Theorems 3 and 4 in \cite{SU1}, Hypothesis \ref{ginjectivity} will be fulfilled if $M$ is simple. This condition can also be fulfilled by a non-simple manifold. We refer to Section~\ref{sectiongr} for a more detailed discussion about this aspect. 

Finally, let us introduce the set $\mathcal E \subset \mathcal D$ where we recover the coefficients. For any $x \in M$, we define $D_g(x):=\sup{\{\tau_{exit}(y,v)\,|\,(y,v) \in \pd_-SM,\,\text{$x$ is in $\gamma(\cdot;y,v)$}\}}$ and let $D_g(M):=\sup{\{D_g(x)\,|\,x \in M\}}$. For $T>2D_g(M)$, we define
$$ \mathcal E:=\{(t,x)\in \M\,|\, D_g(x)< t<T-D_g(x)\}.$$

\begin{thm}\label{t1} Let $g \in \CI^6(M;\Sym^2 M)$, $\A_1, \A_2\in \mathcal C^1(\M;T^*\M)$ and $q_1,q_2\in \CI(\M)$. Assume that $\supp{(\A_1-\A_2)}\subset \mathcal{E}$, $\supp{(q_1-q_2)} \subset \mathcal{E}$,  
and  
$$\A_1(t,x)=\A_2(t,x),\quad \forall(t,x)\in(0,T)\times \pd M.$$
If Hypothesis~\ref{ginjectivity} holds, then $\Lambda_{\A_1,q_1}=\Lambda_{\A_2,q_2}$
implies that there exists $\psi \in \mathcal C^2(\M)$ with $\psi|_{(0,T)\times \pd M}=0$, such that \eqref{gauge} holds.
\end{thm}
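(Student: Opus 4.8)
The plan is to follow the Boundary Control / Gaussian beam strategy for hyperbolic inverse problems. I would first reduce the equality of DN maps to an integral identity: assuming $\Lambda_{\A_1,q_1}=\Lambda_{\A_2,q_2}$, take an approximate solution $u_1$ to the equation with $(\A_1,q_1)$ and initial conditions at $t=0$, and a solution $u_2$ to the backward problem with $(\A_2,q_2)$ and final conditions at $t=T$; multiply the difference equation by $\overline{u_2}$ and integrate by parts over $\M$. Because the Cauchy data agree on $(0,T)\times\pd M$ and the solutions vanish at the appropriate time endpoints (using $T>2D_g(M)$ so that finite speed of propagation makes the relevant boundary terms vanish off $\mathcal E$), this yields an identity of the form $\int_{\M}\big[(\A_1-\A_2)\nabla^{\g}u_1\big]\overline{u_2}+(q_1-q_2)u_1\overline{u_2}\,dV_{\g}=0$ for a large family of solutions $u_1,u_2$.

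Next I would insert Gaussian beam quasimodes concentrating on null geodesics of $(\M,\g)=([0,T]\times M,-dt^2+g)$. Since the metric is a product, null geodesics project to (reparametrized) unit-speed geodesics $\gamma$ on $M$ together with linear motion in $t$; a beam concentrating near such a null geodesic, with amplitude carrying a parallel-transported covector, has the property that as the concentration parameter tends to infinity the integral identity localizes to the null geodesic. The leading-order term produces the light ray transform of $\A_1-\A_2$ along the null geodesic (the magnetic part pairing with the tangent vector), and a subleading term together with a second beam with an extra phase/amplitude factor produces the light ray transform of $q_1-q_2$ and of a quadratic-in-$\A$ expression. This is the standard but delicate computation: one must keep track of the transport equations for the amplitudes and check that the error terms vanish in the limit, using the $\CI^6$ regularity of $g$ and $\CI^1$ regularity of $\A$ to justify the beam construction and the remainder estimates.

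From the vanishing of these light ray transforms over all null geodesics, I would pass to the spatial picture: a null geodesic in the product corresponds to a maximal unit-speed geodesic $\gamma$ on $M$ traversed in a time interval fitting inside $(D_g(x),T-D_g(x))$, so on $\mathcal E$ the vanishing of the light ray transform of $(t,x)\mapsto (q_1-q_2, \A_1-\A_2)$ reduces, after a Fourier transform in $t$ (or a one-parameter family of translations in time), to the vanishing of the \emph{geodesic} ray transform $\mathcal I_\gamma$ on $M$ of the time-frequency components. Here one separates the function part and the one-form part: the $dt$-component $b_1-b_2$ pairs with the time direction and, after handling the time dependence, is seen to vanish by injectivity on functions, while the spatial one-form $\omega_1-\omega_2$ pairs with $\dot\gamma$ and by Hypothesis~\ref{ginjectivity} its solenoidal part vanishes, leaving $\omega_1-\omega_2=d_x\psi(t,\cdot)$ for some potential; similarly the scalar electric term, after subtracting the contribution of the now-identified magnetic part, is recovered up to the gauge terms in \eqref{gauge}. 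Assembling the $t$-dependence of $\psi$ from the $b$-component and checking $\psi|_{(0,T)\times\pd M}=0$ (using $\A_1=\A_2$ on the lateral boundary and $\supp(\A_1-\A_2)\subset\mathcal E$) and the $\mathcal C^2$ regularity of $\psi$ (via the elliptic regularity remark on the Helmholtz decomposition) gives exactly \eqref{gauge}.

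The main obstacle is the Gaussian beam analysis on the Lorentzian product manifold: constructing beams of sufficiently high order with controlled remainders under only finite ($\CI^6$, $\CI^1$) regularity, and extracting cleanly from the concentration limit the two distinct transforms — the first-order (magnetic) and zeroth-order (electric) light ray transforms — while correctly accounting for the quadratic term $\langle\nabla^\g\psi,\nabla^\g\psi\rangle_\g$ that appears in the gauge. A secondary technical point is the reduction from the light ray transform on $\M$ to the geodesic ray transform on $M$: one must ensure that \emph{all} maximal geodesics relevant to Hypothesis~\ref{ginjectivity} are reached by null geodesics lying in the region where the integral identity is valid, which is precisely why $\mathcal E$ is defined via $D_g(M)$ and why $T>2D_g(M)$ is assumed.
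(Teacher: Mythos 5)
Your overall strategy coincides with the paper's (Gaussian beams for the equation and its adjoint, an integral identity from the equality of DN maps, reduction to light ray transforms, then to the geodesic ray transform via Fourier analysis in the time variable), but two steps as you describe them would not go through. First, the leading-order limit of the integral identity does \emph{not} produce the light ray transform of $\A_1-\A_2$ linearly: the transport equations force the beam amplitudes to carry the factors $e^{\pm\frac12\int \A_j\dot\beta\,d\tau}$, so the product $v_1\bar v_2$ contains $\mathcal G(s)=e^{\frac12\int_{s_-}^{s}(\A_1-\A_2)\dot\beta\,d\tau}$, and since $(\A\nabla^{\g}\phi)\mathcal G=2\dot{\mathcal G}$ the limit only yields the nonlinear statement $\exp\bigl(\tfrac12\int_\beta(\A_1-\A_2)\bigr)=1$, i.e. the transform is determined only modulo $4\pi i\mathbb{Z}$. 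The paper removes this integer ambiguity by moving along the family of null geodesics $\beta_s(t)=(s+t,\gamma(t))$, using the compact support of $\A_1-\A_2$ and continuity in $s$; without such an argument your claim that the magnetic light ray transform vanishes is unjustified. Relatedly, the paper does not extract $q$ from ``subleading terms'' of the same identity: it first converts $\A_1-\A_2=\bar d\psi$ into a gauge transformation replacing $(\A_2,q_2)$ by $(\A_1,\tilde q_2)$, so the quadratic term $\langle\nabla^{\g}\psi,\nabla^{\g}\psi\rangle_{\g}$ you flag as an obstacle is absorbed into $\tilde q_2$, and a second, clean beam computation with equal magnetic potentials gives the light ray transform of $q_1-\tilde q_2$ directly.

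Second, the passage from the light ray transform to Hypothesis~\ref{ginjectivity} is not a frequency-by-frequency application of geodesic ray transform injectivity: for $\tau\neq0$ the Fourier transform in the time-translation parameter gives $\int_I e^{i\tau t}\hat q(\tau,\gamma(t))\,dt$ (and the analogous oscillatory expression for the one-form), to which the hypothesis does not apply. The paper's Proposition~\ref{introlightray} instead evaluates all $\tau$-derivatives at $\tau=0$, proves by induction (Lemma~\ref{sequence} in the one-form case, producing the potentials $\psi_k$) that each Taylor coefficient lies in the kernel described by the hypothesis, and then uses analyticity of $\hat q$, $\hat b$, $\hat\omega$ in $\tau$ (compact support in time) to conclude. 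Note also that $b_1-b_2$ does \emph{not} vanish, contrary to your sketch; only $\hat b(0,\cdot)=0$, and the gauge is then assembled as $\psi(t,x)=\int_0^t(b_1-b_2)(s,x)\,ds$, so that $\pd_t\psi=b_1-b_2$ and $\omega_1-\omega_2=d\psi$, with $\CI^2$ regularity coming from $\bar d\psi=\A_1-\A_2\in\CI^1$. These are the missing ideas you would need to supply to make the outline into a proof.
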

\bigskip

The proof of this theorem relies in part on injectivity of the so-called light ray transforms of vector valued functions. Recall that a curve $\beta$ in $\M$ is a null geodesic (or a light ray), if $\nabla^{\bar{g}}_{\dot{\beta}} \dot{\beta}=0$ and $\langle \dot{\beta},\dot{\beta}\rangle_{\g}=0$. Given the product structure of $\M$, we can parametrize maximal null geodesics through $\beta(t)=(\tilde{t}+t,\gamma(t)), t \in I$ where $\gamma$ is a unit speed maximal geodesic in $M$ and $\tilde{t} \in \R$. We define the light ray transforms $\mathcal L_q$ and $\mathcal L_{\mathcal A}$ of $q \in \CI(\M)$ and $\mathcal A \in \CI(\M;T^*\M)$ as follows:
$$\mathcal L_{q} (\beta):= \int_I q(\beta(t))\,dt \quad \text{and}\quad \mathcal L_{\mathcal A}(\beta):=\int_\beta \mathcal A .$$  
We have the following proposition, that will be proved in Section~\ref{inversionsection}.
\begin{prop}
\label{introlightray}
Let $(q,\mathcal \A) \in \CI(\M) \times \CI^1(\M;T^*\M)$ be such that $\supp{q},\supp{\mathcal A} \subset \mathcal E$. If Hypothesis~\ref{ginjectivity} holds, then we have the following statements:
\begin{itemize}
\item[(i)] If $\mathcal L_q (\beta)=0$ for all maximal null geodesics $\beta$ in $\M$, then $q \equiv 0$ on $\M$.
\item[(ii)]If $\mathcal L_{\mathcal A}(\beta)=0$ for all maximal null geodesics $\beta$ in $\M$, then there exists $\psi \in \CI^2(\M)$ vanishing on $\pd \M$ such that $\mathcal A\equiv\bar{d}\psi$ on $\M$.
\end{itemize}
\end{prop}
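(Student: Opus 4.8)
The plan is to reduce both statements, via an expansion of the null geodesic data in the ``time shift'' parameter, to the injectivity of the geodesic ray transform on $(M,g)$ provided by Hypothesis~\ref{ginjectivity}. As recalled in the excerpt, maximal null geodesics are $\beta_{\tilde t}(t)=(\tilde t+t,\gamma(t))$, $t\in I=(0,\ell)$, with $\gamma$ a maximal unit speed geodesic of $(M,g)$ of length $\ell=\tau_{exit}$ and $\tilde t\in\R$. Since $\supp q$ and $\supp\mathcal A$ are compact subsets of the \emph{open} set $\mathcal{E}$, they are at positive distance from $\{t=0\}\cup\{t=T\}$, so $q$ and $\mathcal A$ extend by zero to smooth objects on $\R\times M$ with compact support in $t$. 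Because $\gamma(t)$ lies on the maximal geodesic $\gamma$ we have $D_g(\gamma(t))\geq\ell$, and a short argument using this together with $\supp q,\supp\mathcal A\subset\mathcal{E}$ and $T>2D_g(M)\geq 2\ell$ shows that $\tilde t\mapsto\int_I q(\tilde t+t,\gamma(t))\,dt$ and $\tilde t\mapsto\int_I[b(\tilde t+t,\gamma(t))+\omega(\tilde t+t,\gamma(t))\dot\gamma(t)]\,dt$ are supported in the set of $\tilde t$ for which $\beta_{\tilde t}$ meets $\M$; hence the hypotheses are equivalent to the vanishing of these functions for \emph{all} $\tilde t\in\R$. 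Introducing the time moments $Q_k(x):=\int_\R s^k q(s,x)\,ds$, $B_k(x):=\int_\R s^k b(s,x)\,ds$ and the one-forms $\Omega_k:=\int_\R s^k\omega(s,\cdot)\,ds$ on $M$ ($k\geq0$), multiplication by $\tilde t^k$, integration in $\tilde t$ over $\R$, and the substitution $s=\tilde t+t$ give, for all $k\geq0$ and all maximal $\gamma$,
\[
\sum_{j=0}^{k}\binom{k}{j}(-1)^{k-j}\int_I t^{\,k-j}Q_j(\gamma(t))\,dt=0
\]
for part (i), and
\[
\sum_{j=0}^{k}\binom{k}{j}(-1)^{k-j}\int_I t^{\,k-j}\big[B_j(\gamma(t))+\Omega_j(\gamma(t))\dot\gamma(t)\big]\,dt=0
\]
for part (ii).

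For part (i): the $k=0$ identity is $\mathcal{I}_\gamma(Q_0,0)=0$ for all maximal $\gamma$, so $Q_0\equiv0$ by Hypothesis~\ref{ginjectivity}. Inductively, if $Q_0=\dots=Q_{k-1}\equiv0$ then the $k$-th identity collapses to $\mathcal{I}_\gamma(Q_k,0)=\int_I Q_k(\gamma(t))\,dt=0$ for all $\gamma$, hence $Q_k\equiv0$. Thus every time moment of $q(\cdot,x)$ vanishes; since $q(\cdot,x)$ is continuous with compact support, this forces $q(\cdot,x)\equiv0$ for each $x$, i.e.\ $q\equiv0$ on $\M$.

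For part (ii) we run the parallel induction while tracking the gauge being built. The $k=0$ identity is $\mathcal{I}_\gamma(B_0,\Omega_0)=0$ for all $\gamma$, so Hypothesis~\ref{ginjectivity} gives $B_0\equiv0$ and $\Omega_0^{s}\equiv0$; by the Helmholtz decomposition $\Omega_0=d\phi_0$ with $\phi_0\in H^1_0(M)$, and $\phi_0\in\mathcal C^1(M)$ by elliptic regularity since $\delta\Omega_0$ is continuous. Assume, for $j\leq k-1$, that $\Omega_j=d\phi_j$ with $\phi_j\in H^1_0(M)\cap\mathcal C^1(M)$ and $B_j=-j\,\phi_{j-1}$ (set $\phi_{-1}:=0$). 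In the $k$-th identity, for $j<k$ write $B_j=-j\phi_{j-1}$ and $\Omega_j(\gamma)\dot\gamma=\tfrac{d}{dt}\big(\phi_j(\gamma(t))\big)$, and integrate the terms $\int_I t^{\,k-j}\tfrac{d}{dt}(\phi_j\circ\gamma)\,dt$ by parts; the boundary contributions vanish because $\gamma(0),\gamma(\ell)\in\partial M$, $\phi_j|_{\partial M}=0$, and $t^{\,k-j}$ vanishes at $t=0$ since $k-j\geq1$. A direct binomial computation then shows that all the polynomial-in-$t$ weighted integrals so produced cancel except for one term, $k\int_I\phi_{k-1}(\gamma(t))\,dt$. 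Hence the $k$-th identity reduces to $\mathcal{I}_\gamma\big(B_k+k\phi_{k-1},\,\Omega_k\big)=0$ for all maximal $\gamma$, and Hypothesis~\ref{ginjectivity} yields $B_k=-k\phi_{k-1}$ and $\Omega_k=d\phi_k$ with $\phi_k\in H^1_0(M)\cap\mathcal C^1(M)$, closing the induction.

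It remains to build $\psi$. Let $\widehat f$ denote the Fourier transform of $f$ in $t$; since $b,\omega$ are smooth with compact support in $t$, $\widehat b(\sigma,x)$ and $\widehat\omega(\sigma,x)$ are entire in $\sigma$ by Paley--Wiener. Let $\Phi(\sigma,\cdot)\in H^1_0(M)$ solve $\Delta_g\Phi(\sigma,\cdot)=\delta\widehat\omega(\sigma,\cdot)$; this depends analytically, in fact entirely, on $\sigma$. Since $\partial_\sigma^k\widehat\omega(0,\cdot)=(-i)^k\Omega_k=(-i)^k d\phi_k$ and $\partial_\sigma^k\Phi(0,\cdot)=(-i)^k\phi_k$, the analytic family of one-forms $\widehat\omega(\sigma,\cdot)-d\Phi(\sigma,\cdot)$ has all $\sigma$-derivatives vanishing at $0$, hence vanishes identically; similarly $\partial_\sigma^k\widehat b(0,\cdot)=(-i)^kB_k=-k(-i)^k\phi_{k-1}=\partial_\sigma^k\big(i\sigma\Phi(\sigma,\cdot)\big)\big|_{\sigma=0}$, so $\widehat b(\sigma,\cdot)=i\sigma\Phi(\sigma,\cdot)$. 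Taking $\psi(t,x)$ to be the inverse Fourier transform in $\sigma$ of $\Phi(\sigma,x)$ — which is smooth with compact support in $t$ — we obtain $\partial_t\psi=b$ and $d\psi=\omega$, i.e.\ $\bar{d}\psi=\mathcal A$ on $\M$, while $\psi|_{(0,T)\times\partial M}=0$ because $\Phi(\sigma,\cdot)\in H^1_0(M)$; and $\psi\in\mathcal C^2(\M)$ since $\partial_t\psi=b$ and $d\psi=\omega$ are $\mathcal C^1$. This proves (ii). The main obstacle is the bookkeeping of part (ii): arranging the $\tilde t$-expansion so that each inductive step is a \emph{genuine} (unattenuated) geodesic ray transform — which is exactly what forces the integration by parts and the use of the Dirichlet Laplacian, so that the potentials $\phi_j$ vanish on $\partial M$ (needed both to kill the boundary terms and to make the reconstructed $\psi$ vanish on $(0,T)\times\partial M$) — together with the combinatorial cancellation of the polynomial weights and the final regularity check for $\psi$; the reduction of the null geodesic data on $\M$ to data on $(M,g)$ and the support analysis near the space-like boundary are the remaining delicate points.
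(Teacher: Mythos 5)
Your proof is correct and follows essentially the same route as the paper: expanding the null-geodesic data in the time-shift parameter (your moments $Q_k$, $B_k$, $\Omega_k$ are, up to powers of $i$, the $\tau$-derivatives at $\tau=0$ of the time Fourier transform used in the paper), running the identical induction with the same integration by parts and binomial cancellation against Hypothesis~\ref{ginjectivity}, and reassembling $\psi$ via analyticity of the Fourier transform in the dual time variable. The only cosmetic difference is that the paper defines $\psi(t,x)=\int_0^t b(s,x)\,ds$ directly and matches Taylor coefficients of $\hat\psi$ and $\hat\omega$, rather than solving your $\sigma$-family of Dirichlet problems, which sidesteps the decay/Paley--Wiener care needed in your final inverse-Fourier step.
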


\subsection{Previous literature}
Results related to the recovery of coefficients for hyperbolic equations can in general be divided into two categories of time-independent and time-dependent coefficients. Starting with the seminal works \cite{Bel87,Bel92}, there is an extensive literature related to the recovery of time-independent coefficients for hyperbolic equations. These results usually rely on the Boundary Control method, developed in \cite{Bel87,Bel92} and a time sharp unique continuation theorem \cite{Tataru}, which provide the building blocks of very general results. We refer the reader to \cite{KOM} for an introduction to the method and to \cite{KOP} for an example of a state of the art result in this direction. We also refer to \cite{Bel,KKL} for review. The unique continuation theorem in \cite{Tataru} fails if the dependence of the coefficients on time is non-analytic and therefore extension of these results for general time dependent coefficients is not possible (see e.g. \cite{Al,AB}). We refer the reader to \cite{E2} for a uniqueness result, when the dependence of the coefficients on time is real analytic. Starting with \cite{BK}, methods based on Carleman estimates have also been quite fruitful in deriving uniqueness results for time-independent coefficients of hyperbolic equations. Contrary to the Boundary Control method, where the best known stability estimates are double logarithmic \cite{BKL}, these methods tend to give strong stability estimates. We also mention \cite{LOk} where Boundary Control method is combined with complex geometric optics and stronger estimates are obtained for low frequencies.

In the time-dependent category, most of the results are obtained for the reconstruction of the zeroth order term, $q$, and are based on a use of geometric optic solutions for the wave equation. Let us mention that this approach has also been used in the time-independent category \cite{BD,BJY,Ki,SU2} to obtain strong stability estimates although they suffer from considerably stronger geometric assumptions compared to the Boundary Control method. Methods based on geometric optics were used in the context of recovery of time-dependent coefficients starting with \cite{St}. Among the literature of results in this direction, we refer to \cite{A,I,Ki3,RS,S}. The leading coefficients for the wave equation in all these results are constant. Uniqueness of zeroth order coefficient $q$ for a variable coefficient wave equation was considered in \cite{KiOk} where the recovery of the potential was based on inversion of geodesic ray transform for scalar functions. It should be noted that even in the case where $\A=0$, the result in this paper is a significant improvement of \cite{KiOk}, since there $(M,g)$ was assumed to be simple.

Approaches based on global geometric optic solutions fail if the Riemannian manifold $(M,g)$ is not simple (see Section~\ref{geometricinterpretation}). This motivates the use of Gaussian beams in the current paper. Gaussian beams were introduced in \cite{BU,Ra} and they were first used in the context of inverse problems in \cite{BeKa,KaKu}. We refer the reader to \cite{KKL} for a thorough presentation in the case of a wave equation with a smooth metric, a smooth electric potential and no magnetic potential. This paper is concerned with the reconstruction of time-dependent vector valued coefficients for the wave equation under weak geometrical assumptions on the spatial manifold $(M,g)$ and weaker regularity assumptions on the coefficients. We use Gaussian beams to reduce the inverse problem to the inversion of the light ray transform of the unknown coefficients. The closest previous work to this reduction is \cite{SY}, where the authors study the problem of recovery of the geometry along with a time-dependent magnetic potential $\A$ and an electric potential $q$ in a Lorentzian manifold from a micro-local formulation of a Cauchy data set on the boundary. It is shown that if $\g,\A,q$ belong to some $\CI^k$, with $k$ sufficiently large, then this Cauchy data set uniquely determines the scattering relation of $\g$ along with light ray transforms of $\A,q$. Their approach is based on the study of Fourier Integral Operators and propagation of singularities. The inversion of the light ray transform on a general Lorentzian manifold is left as an open problem. Our Gaussian beam construction makes the reduction to the light ray transform more explicit in terms of the smoothness required. We also succeed in the inversion of the light lay transform, in the sense of Proposition~\ref{introlightray}. Our inversion method for the light ray transform was inspired in part by techniques developed in the context of the Calder\'{o}n problem \cite{Cekic}. 
\subsection{Outline of the paper}
This paper is organized as follows. In Section~\ref{prem}, we discuss the forward problem \eqref{eq1} and also discuss the Hypothesis~\ref{ginjectivity}. In Section~\ref{gaussian section}, we present the Gaussian beam construction near a null geodesic in $\M$. In Section~\ref{reductionsection}, we show the reduction step from the knowledge of the Dirichlet to Neumann map $\Lambda_{\A,q}$, to the knowledge of the light ray transforms of $\A,q$ and conclude that Theorem~\ref{t1} follows from Proposition~\ref{introlightray}. Finally, Section~\ref{inversionsection} is concerned with the proof of Proposition~\ref{introlightray}.    

\section{Preliminaries}
\label{prem}

\subsection{Direct problem}
\label{directpr}
Let $Xu:=\A\nabla^{\g}u+qu$, where $\A$ and $q$ satisfy \eqref{regularity}. We consider the wave equation
    \begin{align}\label{wave}
\begin{cases}
-\Delta_{\g} u + X u = F, & \text{in $\M$,}
\\
u|_{x \in \pd M} = h, & \text{on $(0,T) \times \pd M$,}
\\
u|_{t=0} = u_0, \quad \pd_t u|_{t=0} = u_1, & \text{on $M$}
\end{cases}
    \end{align}
It is classical that $u$ is in the energy space
    \begin{align}\label{energy_sp}
\CI([0,T]; H^1(M)) \cap \CI^1([0,T]; L^2(M))
    \end{align}
when $h = 0$, $F \in L^2(\M)$, $u_0 \in H^1_0(M)$ and $u_1 \in L^2(M)$.
The wave equation
    \begin{align}\label{wave0}
\begin{cases}
-\Delta_{\g} v = F, & \text{in $\M$,}
\\
v|_{x \in \pd M} = h, & \text{on $(0,T) \times \pd M$,}
\\
v|_{t=0} = u_0, \quad \pd_t v|_{t=0} = u_1, & \text{on $M$,}
\end{cases}
    \end{align}
was considered in \cite{LLT}. It was shown there that if $F$ and $u_1$ are as above, and $u_0 \in H^1(M)$ and $h \in H^1((0,T) \times \pd M)$ satisfy the compatibility condition 
    \begin{align}\label{comp_cond}
h|_{t=0} = u_0|_{x\in \pd M},
    \end{align}
then the solution $v$ is the energy space (\ref{energy_sp}), and 
$\pd_{\bar\nu} v|_{x \in \pd M} \in L^2((0,T) \times \pd M).$

Let us now set $u = v - w$ where $v$ 
is the solution of (\ref{wave0}) with $F$, $u_0$, $u_1$ and $h$ as above, and $w$ is the solution of (\ref{wave}) with $F= Xv \in L^2(\M)$, $u_0 = 0$, $u_1 = 0$ and $h=0$. Then $u$ satisfies (\ref{wave}) with the same $F$, $u_0$, $u_1$ and $h$ as in (\ref{wave0}) for $v$. As both $v$ and $w$ are in (\ref{energy_sp}), so is $u$. But then $-\Delta_{\g} u = F - X u \in L^2(\M)$ and $\pd_\nu u|_{x \in \pd M} \in L^2((0,T) \times \pd M)$.
It is straightforward to turn this regularity result to the corresponding estimate
    \begin{align}\label{energy}
&\norm{u}_{\CI([0,T]; H^1_0(M)) \cap \CI^1([0,T]; L^2(M))}
+ \norm{\pd_\nu u}_{L^2((0,T) \times \pd M)}
\\\notag&\quad\le C (\norm{F}_{L^2(\M)} + \norm{h}_{H^1((0,T) \times \pd M)} + \norm{u_0}_{H^1(M)} + \norm{u_1}_{L^2(M)}),
    \end{align}
for solutions $u$ of (\ref{wave}) under the compatibility condition (\ref{comp_cond}).

\def\I{I}
We write $\I f(t) = \int_0^t f(s) ds$, and show now that $u$ satisfies the estimate
    \begin{align}\label{energy_int}
\norm{u}_{L^2(\M)} \le C \norm{\I F}_{L^2(\M)},
    \end{align}
when $u_0$, $u_1$ and $h$ vanish identically.  
The map taking $F$ to $u$ is continuous from $L^2(\M)$ to $H^1(\M)$, and by considering its adjoint, we obtain also continuity from $H^{-1}(\M)$ to $L^2(\M)$.
Let $F \in L^2(\M)$ and define $v$ and $w$ as above, but with $u_0$, $u_1$ and $h$ vanishing also for $v$. Then $-\Delta_{\g} I v = I F$ and
    \begin{align*}
\norm{v}_{L^2(\M)} = 
\norm{\pd_t I v}_{L^2(\M)}
\le C \norm{I F}_{L^2(\M)}.
    \end{align*}
Moreover,
    \begin{align*}
\norm{w}_{L^2(\M)} \le 
C \norm{X v}_{H^{-1}(\M)}
\le C \norm{v}_{L^2(\M)},
    \end{align*}
and the above two estimates imply (\ref{energy_int}).

\subsection{On inversion of the geodesic ray transform}
\label{sectiongr}
We are not aware of results on inversion of the geodesic ray transform $\mathcal I_\gamma(f,\alpha)$ when the metric tensor $g$ is assumed to be only $\mathcal C^6$-smooth. Therefore, for the purposes of the present section, we assume $g$ to be $\mathcal C^\infty$-smooth.

It is well-known that simple manifolds satisfy Hypothesis~\ref{ginjectivity}, see Theorems 3 and 4 in \cite{SU1}.
It is also likely that the method to invert the geodesic ray transform using convex foliations, originating from \cite{UV}, can be used to show that Hypothesis~\ref{ginjectivity} holds under the assumptions that the boundary of $M$ is strictly convex in the sense of the second fundamental form, and that there is a strictly convex function on $M$. In \cite{PSUZ} this is shown under the further assumption that $f$ and $\alpha$ are smooth.
Let us point out that, even when $\mathcal A = 0$, combining Theorem~\ref{t1} and \cite{PSUZ} gives a result on unique determination of smooth $q$ that falls outside the scope of the closest previous results \cite{KiOk}.

Let us now describe a non-simple case satisfying Hypothesis~\ref{ginjectivity} as studied in \cite{SU_08}.
There it is assumed that $(M,g)$ satisfies the following: 
\begin{itemize}
\item[(i)] $M$ and $\pd M$ have real analytic atlases.
\item[(ii)] There is an open set of simple geodesics $\Gamma$ on a slightly larger manifold $(\hat M, g)$ such that $T^*M \subset \{N^* \gamma\,|\, \gamma \in \Gamma\}$.
\item[(iii)] Any path in $M$ connecting two boundary points is homotopic to a path of the form 
$$
c_1 \cup \gamma_1 \cup c_2 \cup \gamma_2 \cup \dots \cup \gamma_k \cup c_{k+1},
$$ 
where $c_j$ are paths on $\pd M$ and $\gamma_j = \tilde \gamma_j|_M$ for some $\tilde \gamma_j \in \Gamma$. Moreover, $\gamma_j$ intersects $\pd M$ only at its endpoints and is transversal to $\pd M$.
\item[(iv)] $g$ is generic in the sense of \cite[Corollary 1]{SU_08}.
\end{itemize}
Here $\gamma \in \Gamma$ being simple means that the endpoints of $\gamma$ are in $\hat M \setminus M$ and there are no conjugate points on $\gamma$, simple geodesics are given topology in the sense of the parametrization (2) in \cite{SU_08}, and $N^* \gamma$ denotes the conormal bundle of $\gamma$, viewed as a 1-dimensional submanifold of $\hat M$.

\section{Gaussian Beam Solutions}
\label{gaussian section}
The goal of this section is to construct the so called Gaussian beam solutions $u_j\in \mathcal C([0,T];H^1( M))\cap \mathcal C^1([0,T];L^2( M))$ for $j=1,2$ of the problems

\bel{Gsol}
\begin{array}{ll}
\left\{\begin{array}{l}
-\Delta_{\g}u_1+\A_1\nabla^{\g}u_1+q_1u_1=0,\quad (t,x)\in \M,\\ 
u_1(0,x)=\partial_tu_1(0,x)=0,\quad  x\in M,\end{array}\right.
\\
\\
\left\{\begin{array}{l}
-\Delta_{\g}u_2-\A_2\nabla^{\g}u_2+(-\bar \delta \mathcal A_2+q_2)u_2=0,\quad (t,x)\in \M,\\ 
u_2(T,x)=\partial_tu_2(T,x)=0,\quad  x\in M,
\end{array}\right.
\end{array}
\ee
taking the form
\bel{go1}u_1(t,x)=e^{ i\rho \phi(t,x)}v_1(t,x)+R_{1,\rho}(t,x),\quad (t,x)\in \M,\ee
\bel{go2}u_2(t,x)=e^{ -i\rho \bar{\phi}(t,x)}\bar{v}_2(t,x)+R_{2,\rho}(t,x),\quad (t,x)\in \M,\ee
with $\rho>1$. Here, $\bar \delta$ denotes the divergence operator on $(\M,\g)$ sending one-forms to functions. The two equations in \eqref{Gsol} are in essence formal adjoints of each other, with respect to the real $L^2(\M)$ inner product.   
The phase function $\phi$ will be chosen so that both the oscillatory parts $e^{i\rho \phi}$ and $e^{-i\rho\bar{\phi}}$ remain bounded in $L^2(\M)$ as $\rho \to \infty$ and such that the principal terms $|e^{ i\rho \phi(t,x)}v_k(t,x)|$ are concentrated near a fixed maximal null geodesic in $\mathcal D$. The remainder terms $R_{1,\rho}, R_{2,\rho}$ will vanish in the limit $\rho \to \infty$. 


\subsection{Fermi Coordinates}

We will start by reviewing Fermi coordinates near a fixed maximal null geodesic $\beta: [\tau_-,\tau_+] \to \M$, where we are using the time coordinate $t$ as the parametrization for the null geodesic. Here, $\beta(\tau_-), \beta(\tau_+)$ denote the start and end points of the maximal null geodesic on the boundary $(0,T) \times \partial M$. Note, in particular, that $\beta(t) \in \mathcal{D}$ for all $t \in [\tau_-,\tau_+]$. Fermi coordinates were first introduced by E. Fermi \cite{F}. In this paper, the geometry has a product structure which makes the construction of Fermi coordinates slightly easier. This is to some extent similar to \cite{DKLS,KSa}, where a coordinate construction was carried out in the context of an elliptic partial differential equation on a Riemannian manifold with a product structure. We will therefore follow \cite{DKLS} with some modifications. 

Let us introduce notation that will be fixed throughout the remainder of this paper. We begin by embedding $(M,g)$ into a closed manifold $(\hat{M},g)$ and extend the null geodesic $\beta$ to $\hat{\M}:=(0,T) \times \hat{M}$ such that $\beta(t)$ is well-defined on the interval $[\tau_- -\epsilon,\tau_++\epsilon]$ with $\epsilon>0$ a small constant. We also consider extensions of the metric $g$ and the coefficients $\A_1,\A_2,q_1,q_2$ to the bigger set $\hat{\M}$ such that the extended metric $g$ is $\CI^6$ smooth and the extensions of the coefficients satisfy the regularity assumptions in \eqref{regularity} with $\M$ replaced by $\hat{\M}$. Finally, for the sake of convenience, we define the constants $a,b,a_0,b_0,s_-$ and $s_+$ as follows 
\bel{normalizedconstants}
a=\sqrt 2(\tau_--\epsilon), \quad b=\sqrt 2(\tau_++\epsilon), \quad a_0= \sqrt 2(\tau_--\frac{\epsilon}{2}), \quad b_0=\sqrt 2(\tau_++\frac{\epsilon}{2}),
\ee
and
$$ s_-=\sqrt{2} \tau_-, \quad s_+=\sqrt 2\tau_+.$$
%
\noindent We will now present Fermi coordinates near the null geodesic $\beta$ in $\hat{\M}$. In all the following arguments, $\beta(t)$ denotes the parametrization of the null geodesic with respect to the time coordinate in $\M$. 
\begin{lem}(Fermi coordinates)
\label{fermi}
Let $\beta:(\tau_--\epsilon,\tau_++\epsilon) \to \hat{\M}$ be a null geodesic as above. There exists a coordinate neighborhood  $(U,\Phi)$ of $\beta([\tau_--\frac{\epsilon}{2},\tau_++\frac{\epsilon}{2}])$ denoted by $(z^0:=s,z^1:=r,z^2,\ldots,z^n)$ such that:
\begin{itemize}
\item {$\Phi(U)=(a,b) \times B(0,\delta')$ where $a,b$ are given by \eqref{normalizedconstants} and $B(0,\delta')$ denotes a ball in $\mathbb{R}^{n}$ with a sufficiently small radius $\delta'$ only depending on the geometry $(\M,\g)$ and $\epsilon$.}
\item{$\Phi(\beta(t))=(\sqrt{2}t,\underbrace{0,\ldots,0}_{n \hspace{1mm}\text{times}})$ for all $t \in (\tau_--\epsilon,\tau_++\epsilon)$.} 

\end{itemize}
Furthermore the metric $\g$ in this coordinate system is $\CI^4$ smooth and satisfies 
$$\bar{g}|_\beta = 2dsdr+\sum_{j=2}^n (dz^j)^2 \quad \text{and}\quad \frac{\pd\g_{jk}}{\partial z^i}|_\beta = 0\quad \text{for all}\quad 0\leq i,j,k \leq n.$$
\end{lem}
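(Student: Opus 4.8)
The plan is to use the product structure $\M=[0,T]\times M$, $\g=-dt^2+g$ to reduce the construction to an ordinary Fermi (semigeodesic) coordinate construction on the spatial factor $(M,g)$, followed by a single linear ``null rotation'' in the plane spanned by the time coordinate and the arclength coordinate. First, write $\beta(t)=(t,\gamma(t))$; this is precisely the parametrization by the time coordinate used in the statement, and by the product structure the conditions $\langle\dot\beta,\dot\beta\rangle_{\g}=0$ and $\nabla^{\g}_{\dot\beta}\dot\beta=0$ are equivalent to $|\dot\gamma|_g\equiv1$ and $\nabla^g_{\dot\gamma}\dot\gamma=0$. Thus $\gamma$ is a unit-speed geodesic of the closed extension $(\hat{M},g)$ defined on $(\tau_--\epsilon,\tau_++\epsilon)$, and, absorbing an additive constant, $t$ is its arclength parameter, so that $\gamma(t)$ is the spatial projection of $\beta(t)$.

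Next I would run the standard Fermi construction for $\gamma$ in $(\hat{M},g)$: choose a parallel $g$-orthonormal frame $e_1=\dot\gamma,e_2,\ldots,e_n$ along $\gamma$ and set
$$F(\sigma,y^2,\ldots,y^n):=\exp^g_{\gamma(\sigma)}\Big(\sum_{j=2}^n y^j e_j(\sigma)\Big).$$
Because $dF_{(\sigma,0)}$ maps the standard basis onto $e_1(\sigma),\ldots,e_n(\sigma)$, for $\delta'$ small enough (depending only on the geometry and $\epsilon$) this is a diffeomorphism onto a tubular neighborhood of $\gamma$, and in the induced coordinates $(x^1=\sigma,x^2=y^2,\ldots,x^n=y^n)$ the classical identities $g_{ij}|_\gamma=\delta_{ij}$ and $\Gamma^k_{ij}|_\gamma=0$ hold, hence $\partial_k g_{ij}|_\gamma=0$ for all $i,j,k$. (The vanishing of the Christoffel symbols along $\gamma$ follows from the radial curves $s\mapsto F(\sigma,sy)$ being coordinate straight lines, hence geodesics, for the purely transverse symbols; from $\gamma$ being a coordinate line, for $\Gamma^k_{11}$; and from parallelity of the frame, for $\Gamma^k_{1j}$.)

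Then I would pass to $\hat{\M}=(0,T)\times\hat{M}$ in the coordinates $(t,\sigma,y)$, so that $\g=-dt^2+\sum_{i,j=1}^n g_{ij}(\sigma,y)\,dx^i dx^j$, and apply the linear change $s=(t+\sigma)/\sqrt2$, $r=(\sigma-t)/\sqrt2$, $z^j=y^j$ for $j\geq2$, i.e. $z^0=s$, $z^1=r$ in the notation of the statement. This sends $\beta(t)=(t,t,0,\ldots,0)$ to $(\sqrt2\,t,0,\ldots,0)$, and since $-dt^2+d\sigma^2=2\,ds\,dr$ it gives $\g|_\beta=2\,ds\,dr+\sum_{j=2}^n(dz^j)^2$. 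For the first-order condition, $\g$ is independent of $t$ by the product structure, while $\partial_\sigma g_{ij}|_\gamma=\partial_{y^k}g_{ij}|_\gamma=0$ by the previous step, so every first derivative of $\g$ in the $(t,\sigma,y)$ coordinates vanishes on $\beta$; since $(s,r,z)$ are linear in $(t,\sigma,y)$, the $\partial_{z^i}\g_{jk}$ are constant-coefficient combinations of these and also vanish on $\beta$. One then sets $U:=\Phi^{-1}\big((a,b)\times B(0,\delta')\big)$ with $a,b$ as in \eqref{normalizedconstants}. Here one should note that, although $\gamma$ may self-intersect in $M$, the curve $\beta$ is always an embedded arc in $\M$ because the $t$-coordinate separates the sheets, so shrinking $\delta'$ if necessary makes $\Phi$ a genuine chart on a tubular neighborhood of $\beta$.

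Finally, the regularity loss must be tracked: with $g\in\CI^6$ the Christoffel symbols are $\CI^5$, so the geodesic flow, the exponential map, and parallel transport are all $\CI^5$; hence $F$, and with it $\Phi^{-1}$ and $\Phi$, are $\CI^5$; and expressing $\g$ in the new chart multiplies $\g_{ab}\circ\Phi^{-1}\in\CI^5$ by Jacobian entries of $\Phi^{-1}$ that lie in $\CI^4$, yielding $\g\in\CI^4$ in the chart, as claimed. I expect the only points that need care to be this derivative bookkeeping (confirming the loss is exactly two) and the injectivity of $\Phi$ on the whole slab; neither is deep, and the single genuinely simplifying observation, following \cite{DKLS}, is that the product structure lets the whole construction be carried out on $(M,g)$ and then composed with one linear rotation, avoiding the Lorentzian exponential map entirely.
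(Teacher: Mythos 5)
Your proposal is correct and follows essentially the same route as the paper: build Riemannian Fermi coordinates along the spatial geodesic $\gamma=\Pi\beta$ via a parallel orthonormal frame and the exponential map of $(\hat M,g)$, then compose with the affine null rotation mixing $t$ and arclength, using the product structure for the zeroth and first order conditions on $\g$ along $\beta$, injectivity from the fact that the $t$-coordinate makes $\beta$ embedded, and the same $\CI^6\to\CI^5\to\CI^4$ regularity bookkeeping. No substantive differences from the paper's argument.
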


\begin{proof}
Let us begin by defining $\Pi: \hat{\M} \to \hat{M}$ through $\Pi(t,x)=x$. Note that $\Pi \beta = \gamma$ where $\gamma$ is a unit speed geodesic passing through the point $x_0=\Pi(\beta(a))$. We choose $\{\alpha_2,...,\alpha_n\}$
such that the set $\{\dot \gamma(x_0),\alpha_2,...,\alpha_n\}$ forms an orthonormal basis for $T_{x_0}\hat{M}$. Let $y_1$ denote the arc length parameter along the geodesic $\gamma$ from the point $x_0$. For each $2\leq k \leq n$, let $e_k(y_1) \in T_{\gamma(y_1)}\hat{M}$ denote the parallel transport of $\alpha_k$ along $\gamma$ to the point $\gamma(y_1)$. Since $\dot{\gamma}$ is also parallel along $\gamma$, the set $\{\dot \gamma(y_1),e_2(y_1),...,e_n(y_1)\}$ forms an orthonormal basis for $T_{\gamma(y_1)}\hat{M}$. We now define the coordinate system $(y^0,\ldots,y^n)$ through $\mathcal F_1:\mathbb{R}^{n+1} \to \hat{\M}$:
$$\mathcal F_1(y^0:=t,y^1,...,y^n) = (t,\exp_{\gamma(y^1)}(\sum_{\alpha=2}^{n}y^{\alpha} e_{\alpha}(y^1))),$$
where $\exp_{p}(\cdot)$ denotes the exponential map on $M$ at the point $p$. Let us remark that since $g \in \CI^6(M;\Sym^2M)$, the map $\mathcal F_1$ is locally in $\CI^5$ (see for example \cite{DK}). 

We now define $(s:=z^0,r:=z^1,\ldots,z^n)=\mathcal F_2 (y^0,\ldots,y^n)$ through 
\bel{}
\left\{\begin{array}{l}     
 s :=z^0:=\frac{1}{\sqrt{2}}(t+y^1)+\frac{a}{2},\\ 
r:=z^1:=\frac{1}{\sqrt{2}}(-t+y^1)+\frac{a}{2},\\
z^j := y^j  \quad \forall j \geq 2. \end{array}\right.\ee
For the sake of brevity, we will also use the notations $z=(s,z')=(s,r,z'')$ for this coordinate system. Let us consider the composition map $\mathcal F:\R^{1+n} \to \hat{\M}$ given by $\mathcal F= \mathcal F_1 \circ \mathcal F_2^{-1}$. It is clear that for all $t \in [\tau_--\epsilon,\tau_++\epsilon]$:
$$\mathcal F(\sqrt{2}t,0) =\mathcal{F}_1(t,t-a\frac{\sqrt{2}}{2},0)=\mathcal{F}_1(t,t-(\tau_--\epsilon),0)=\beta(t),$$  
implying that $\mathcal{F}(s,0)$ is injective for all $s \in (a,b)$ as $\beta$ is not self-intersecting on the time interval $[\tau_--\epsilon,\tau_++\epsilon]$. Furthermore, for all $s \in (a,b)$ it holds that 
\[
\begin{aligned}
&\frac{\partial}{\partial s} \mathcal F(s,0)= \frac{1}{\sqrt 2}(\pd_t +\dot{\gamma}(\frac{\sqrt 2}{2}(s-a))),\\
&\frac{\partial}{\partial r}\mathcal F(s,0)=\frac{1}{\sqrt 2}(-\pd_t+\dot{\gamma}(\frac{\sqrt 2}{2}(s-a))),\\
&\frac{\partial}{\partial \tau} \mathcal F(s,\tau v_{\alpha})|_{\tau=0}=e_{\alpha}(\frac{\sqrt 2}{2}(s-a)),
\end{aligned}
\]
where $v_{\alpha}$ denotes the $\alpha$th coordinate vector in $\mathbb{R}^{n-2}$.  Thus $\mathcal F(s,z')$ is a locally $\mathcal C^5$ map in a neighborhood of the null geodesic $\beta$ such that $\mathcal F(s,0)$ is injective and $D\mathcal{F}(s,0)$ is invertible. The inverse mapping theorem applies to deduce that $\mathcal F(s,z')$ is a diffeomorphism on a neighborhood of $[a_0,b_0]\times B(0,\delta')$ with $\delta'$ sufficiently small. We then choose $\Phi = \mathcal F^{-1}$. Note that $\Phi \in \CI^5$ near the null geodesic, and since $g \in \CI^6(M;\Sym^2M)$, we deduce that the pull back of the metric metric $\g$, $\Phi^*\g$ is $\CI^4$ smooth in the Fermi coordinate system. 

Let us now study the form of the metric in this single coordinate chart $(U,\Phi)$ given by the $z$-coordinates. We will first derive the form of the metric in $y$-coordinates which is just an affine transformation of the $z$ coordinates (the linear part of this affine transformation is unitary). To find the form of the metric in $y$-coordninates, we note that $\mathcal F_1$ preseves the product structure on $\hat{M}$ and therefore it suffices to check the form of the Riemannian metric $g$ near the geodesic $\gamma=\Pi(\beta)$ in $\hat{M}$. Let the indices $i,j,k$ run between $1$ and $n$ and the indices $\alpha,\beta$ between $2$ and $n$.  Since the set $\{\dot \gamma(y_1),e_2(y_1),...,e_n(y_1)\}$ is an orthonormal basis, we see that $g_{jk}|_{\gamma} = \delta_{jk}|_{\gamma}$. This implies that $\partial_1 g_{jk}|_{\gamma}=0$. Now note that:
$$ \partial_{\alpha} g_{ij}|_\gamma = \langle \nabla^g_{\partial_\alpha} \partial_i, \partial_j \rangle_g|_\gamma + \langle \partial_i, \nabla^g_{\partial_\alpha} \partial_j \rangle_g|_\gamma,$$
\noindent where $\nabla^g$ denotes the Levi-Civita connection on $(\hat{M},g)$. Using the symmetry for Levi-Civita connection we see that
$$ \nabla^g_{\partial_\alpha} \partial_1 |_\gamma= \nabla^g_{\partial_1} \partial_\alpha|_\gamma = \nabla^g_{\dot{\gamma}(y^1)} e_{\alpha}(y^1)|_\gamma = 0,$$
which together with $\nabla^g_{\dot{\gamma}}\dot{\gamma}=0$ implies that $\partial_j g_{11}|_{\gamma} =0$ for all $j \in \{1,\ldots,n\}$. This implies that $\Gamma^1_{1j}|_\gamma=0$ for all $j$, where $\Gamma^i_{jk}$ denotes the Christoffel symbol for $(\hat{M},g)$. Pick an arbitrary unit vector $ (v^2,\ldots, v^n) \in \mathbb{R}^{n-1}$ and for each $y^1 \in \mathbb{R}$ consider the geodesic in $\hat{M}$ parametrized as $h(\tilde{r})=\exp_{\gamma(y_1)}(\tilde{r} \sum_{\alpha=2}^n v^{\alpha}e_\alpha(y_1))$ with the corresponding Fermi coordinates $(y_1,\tilde{r}v^2,\ldots,\tilde{r}v^n)$. Note that $\dot{h}^{\alpha}(0)=v^{\alpha}$, and $h$ satisfies
$$ 0=\ddot{h}^{k}_{\alpha \beta}(\tilde{r}) + \Gamma^{k}_{\alpha \beta}(\tilde{r})\dot{h}^\alpha(\tilde{r}) \dot{h}^\beta(\tilde{r}) =\Gamma^{k}_{\alpha \beta}(\tilde{r})\dot{h}^\alpha(\tilde{r}) \dot{h}^\beta(\tilde{r}),$$
since $\ddot{h}^k_{\alpha \beta}(\tilde{r})=0$ in this coordinate system. Since $v^{\alpha}$ is arbitrary, we deduce that 
$$ \Gamma^k_{\alpha \beta}|_\gamma=0 \quad \text{for all $1\leq k\leq n$ and all $2\leq \alpha,\beta \leq n$}.$$
\noindent To conclude that all the Christoffel symbols vanish on the null geodesic, we still need to show $\Gamma^{\alpha}_{1\beta}|_{\gamma}=0$. Using the definition of the Christoffel symbol we see that it suffices to show that $\pd_\alpha g_{1\beta}|_\gamma=0$. But,
$$ \pd_\alpha g_{1\beta}|_\gamma=\langle \pd_1, \nabla^g \pd_\alpha\pd_\beta\rangle_g|_\gamma=\Gamma^1_{\alpha\beta}|_\gamma=0.$$ 
 Thus in $y$ coordinates $\g|_{\beta}=\eta_{jk}$ where $\eta_{jk}$ denotes the Minkowski metric on $\R^{1+n}$ and $\pd_{i}\g_{jk}|_{\beta}=0$. Since the map $ y \to z$ is affine, it is easy to verify that $\g$ satisfies the claimed properties. 
\end{proof}

It should be clear now that the constants $a,a_0,s_-,s_+,b_0,b$ defined in \eqref{normalizedconstants} merely denote the $s$-coordinates of the points $\beta(\tau_--\epsilon)$, $\beta(\tau_--\frac{\epsilon}{2})$, $\beta(\tau_-)$, $\beta(\tau_+)$, $\beta(\tau_++\frac{\epsilon}{2})$, and $\beta(\tau_++\epsilon)$ respectively.

\subsection{Eikonal and Transport equations}
\label{eiktrans}


\noindent Throughout this subsection we will assume that $\beta$ is a null geodesic in $\mathcal D$ that is extended to $\hat{\M}$ as described above with local coordinates $(z^0,\ldots,z^n)$ and that $\A,q$ satisfy \eqref{regularity}. Let us consider the differential operator:
$$L_{\A,q} := -\Delta_{\g} + \A \nabla^{\g} + q.$$


\noindent We start the construction of the approximate Gaussian beam by defining the set
\bel{setV}
\mathcal{V}=\{(z^0,z') \in \hat{\M}\,|\,z^0\in [a_0,b_0],|z'|<\delta\},
\ee
with $0<\delta<\delta'$ (see Lemma~\ref{fermi}) sufficiently small such that the set $\mathcal V$ does not intersect the sets $\{0\}\times M$ and $\{T\}\times M$ (this can always be fulfilled as $\beta \in \mathcal D$). We make a WKB ansatz of the form 
\bel{WKB}u = e^{i\rho \phi} v,\ee
such that $u$ is an approximate solution to $L_{\A,q} u =0$ as a formal power series in $\rho>1$. Here, $\phi \in \CI^3(\mathcal V)$ and $v\in\CI^2(\mathcal V)$. We make the following ansatz for $\phi, v$ respectively:
\bel{phiv}
\phi = \sum_{k=0}^{2} \phi_k(s,z') \quad \text{and} \quad v(s,z')= \rho^{\frac{n}{4}} v_0(s) \chi(\frac{|z'|}{\delta}),
\ee
where for each $k=0,1,2$, $\phi_k$ is a homogeneous polynomial of degree $k$ with respect to the variables $z^{i}$ with $i \in\{1,...,n\}$. The smooth function $\chi:\R \to [0,\infty]$ satisfies $\chi(t)=1$ for $|t| \leq \frac{1}{4}$, and $\chi(t)=0$ for $|t| \geq \frac{1}{2}$. We also define the set where 

\noindent Note that:
$$\Delta_{\g} (e^{i\rho \phi} v) = e^{i\rho \phi} (-\rho^2 \langle d\phi, d\phi \rangle_{\bar{g}} v + i \rho ( 2 \langle d\phi,dv \rangle_{\bar{g}} + (\Delta_{\g} \phi)v) + \Delta_{\g} v).$$
Let us define:
$$\mathcal{S}\phi:=\langle d\phi, d\phi \rangle_{\bar{g}},\quad \text{and}\quad\mathcal{T}_{\A}v:= 2 \langle d\phi,dv \rangle_{\bar{g}}+ (-\A\nabla^{\g} \phi +\Delta_{\g} \phi)v.$$
Then:
\bel{conjugate}
\begin{aligned}
 L_{\A,q} (e^{i\rho \phi} v) &= e^{i\rho \phi} (\rho^2 (\mathcal{S}\phi) v - i \rho \mathcal{T}_{\A} v + L_{\A,q} v).\\
\end{aligned}
\ee
We require that $\mathcal S\phi=\sum_{k,l=0}^n \bar{g}^{kl} \partial_k \phi \, \partial_l \phi $ vanishes up to second order on the null geodesic $\beta$ with respect to the transversal directions (the case $\mathcal S \phi \equiv 0$ is known as an eikonal equation). Put differently, in terms of the Fermi coordinates we require that:
\bel{eikonal}
\frac{\partial^{\alpha_1}}{\partial {z_1}^{\alpha_1}}\ldots \frac{\partial^{\alpha_n}}{\partial {z_n}^{\alpha_n}} (\mathcal{S}\phi)(s,0,\ldots,0) = 0 \quad \text{for} \quad  s \in (a_0,b_0), 
\ee
for all $m=0,1,2$ and all choices of $\alpha_1,\ldots, \alpha_n \geq 0$ such that $\sum_{j=1}^{n} \alpha_j = m$. \\
\noindent We will also require that the following transport equation holds along the null geodesic $\beta$:
\bel{trans}
  (\mathcal{T}_{\A}v_0)(s,0,\ldots,0)= 0 \quad \text{for} \quad  s \in (a_0,b_0). 
\ee
\subsection{Construction of the phase}
\label{phaseconst}
\noindent We begin by solving equation \eqref{eikonal}. For $m=0$, we obtain the equation
$$ (\sum_{k,l=0}^n\bar{g}^{kl} \frac{\partial \phi}{\partial z_k} \frac{\partial \phi}{\partial z_l})|_{\beta} = 0 \quad 1 \leq \forall i,j \leq n.$$
Recalling that $\bar{g}|_\beta = 2dz^0dz^1+(dz^2)^2+\ldots+(dz^n)^2$, this reduces to 
\bel{0th order}
2 \partial_0 \phi \, \partial_1 \phi + \sum_{k=2}^n (\partial_k \phi)^2 =0.
\ee
Similarly, for $m=1$, we obtain (recall that for all $i,j,k$ we have $\partial_i g^{jk}|_\beta=0$) 
\bel{1st order}
(\sum_{k,l=0}^n\bar{g}^{kl} \partial^2_{k\alpha} \phi \, \partial_l \phi)|_\beta=0 ,\quad \text{for all $1\leq \alpha \leq n$}.
\ee
\noindent \noindent Recalling the definition of the phase function $\phi$ from equation \eqref{phiv}, it is clear that equations \eqref{0th order} and \eqref{1st order} will be satisfied if we set $\phi_0=0$ and $\phi_1=r$.  Next we consider the case $m=2$ in equation \eqref{eikonal} and write $\phi_2(s,z') := \sum_{1 \leq i,j \leq n} H_{ij}(s) z^{i}z^{j}$ where $H_{ij}=H_{ji}$ is a symmetric matrix. Let us impose the auxiliary condition that
\bel{postivity}
\Im {H(s)}>0\quad \text{ for $s \in (a_0,b_0)$}.
\ee  
This assumption will lead to a Gaussian decay away from the null geodesic $\beta$, but we will also provide a geometric motivation behind this assumption in the next section. We require:
$$ \frac{\partial^2}{\partial z_i \partial z_j} (\sum_{k,l=0}^n\bar{g}^{kl} \frac{\partial \phi}{\partial z_k} \frac{\partial \phi}{\partial z_l})|_{\beta} = 0 \quad 1 \leq \forall i,j \leq n.$$
This is equivalent to:
$$ (\partial^2_{ij}\bar{g}^{kl} \partial_k \phi \, \partial_l \phi+ 2 \bar{g}^{kl} \partial^3_{kij} \phi \, \partial_l \phi +2\bar{g}^{kl} \partial^2_{ki} \phi \, \partial^2_{lj}\phi + 4 \pd_i \g^{kl}\pd_{jk}^2\phi\,\pd_l \phi )|_{\beta}=0.$$
\noindent which again simplifies to
$$(\partial^2_{ij}\bar{g}^{11}+2\bar{g}^{10} \partial^3_{0ij}\phi +2\sum_{k=2}^{n} \partial^2_{ki} \phi \, \partial^2_{kj}\phi )|_{\beta}=0.$$
We therefore obtain the following Riccati type ODE:  
\bel{riccati}
\frac{d}{ds} H + HCH + D=0, \quad s \in (a_0,b_0) \quad H(s_{-})=H_0\quad \text{with}\quad \Im H_0>0,
\ee
where $C$ is the matrix defined through
\bel{Cmatrix}
\left\{ \begin{array}{rcll} C_{11}= 0&\\
C_{ii}=2& \quad 2\leq  i \leq n \\
 C_{ij}=0& \quad \text{otherwise} \end{array}\right.
\ee 
and $D=(D_{ij})_{n \times n}:=\frac{1}{4}(\pd^2_{ij} {\g}^{11}|_\beta)_{n \times n}$ for all $i,j \in \{1,\ldots,n\}$. Note that since $g$ is $\CI^4$ smooth in the Fermi coordinates, we have $D \in \CI^2([a_0,b_0];\C^{n\times n})$. We now recall two lemmas. For the proofs, we refer the reader to \cite[Lemma 8,Section 8]{KKL} and  \cite[Lemma 10,Section 8]{KKL} respectively.
\begin{lem}
\label{ricA}
The Riccati equation (\ref{riccati}) has a unique solution. The solution $H$ is symmetric and $\Im(H(s))>0$ for all $s \in (a_0,b_0)$. We have $H(s)=Z(s)Y(s)^{-1}$ where 
$Z(t)$ and $Y(t)$ solve the following system of first order linear ODEs:
$$\frac{d}{ds}Y = C Z, \quad Y(s_-)=I,$$
$$\frac{d}{ds}Z= -D Y, \quad Z(s_-)=H_0.$$
In addition, $Y(s)$ is non-degenerate  for all $s \in [a_0,b_0]$.
\end{lem}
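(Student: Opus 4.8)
The plan is to reduce the Riccati equation to a linear system in the usual way (this is the classical Gaussian beam trick going back to Ralston, and is precisely the content of \cite[Section 8]{KKL}), and then to propagate a symplectic/positivity argument forward in $s$. First I would set $H = ZY^{-1}$ where $Y,Z$ solve the linear system stated in the lemma, and verify by direct differentiation that $H$ so defined solves \eqref{riccati} on any subinterval where $Y$ is invertible: one computes $\frac{d}{ds}H = \dot Z Y^{-1} - Z Y^{-1} \dot Y Y^{-1} = -D - H C H$, using $\dot Y = CZ$ and $\dot Z = -DY$. The initial condition $Y(s_-)=I$, $Z(s_-)=H_0$ gives $H(s_-)=H_0$. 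Since $C, D \in \CI^2([a_0,b_0];\C^{n\times n})$ (here $D$ inherits two derivatives from the $\CI^4$ metric, $C$ is constant), the linear system has a unique global $\CI^3$ solution on all of $[a_0,b_0]$, so the only thing that can go wrong is degeneracy of $Y$.

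The symmetry of $H$ and the non-degeneracy of $Y$ I would obtain from two conserved quantities. Consider $W_1(s) := Y^T Z - Z^T Y$ and $W_2(s) := Y^* Z - Z^* Y$ (adjoint $=$ conjugate transpose). Differentiating and using the symmetry of $C$ and $D$ together with the ODEs, one finds $\dot W_1 = Y^T(-DY) + (CZ)^T Z - Z^T(CZ) - (-DY)^T Y = 0$ since $C=C^T$ and $D=D^T$; likewise $\frac{d}{ds}(Y^*Z - Z^*Y)$ computes to a combination that vanishes once one notes $C, D$ are real symmetric, so $\frac{1}{2i}(Y^*Z - Z^*Y)$ is also conserved. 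At $s=s_-$ we have $Y^TZ - Z^TY = H_0 - H_0^T = 0$ (so $H$ is symmetric wherever $Y$ is invertible, since $H^T = (Y^{-1})^T Z^T = (Y^{-1})^T(Z^TY)Y^{-1} = (Y^{-1})^T(Y^TZ)Y^{-1} = ZY^{-1} = H$), and $\frac{1}{2i}(Y^*Z - Z^*Y)\big|_{s_-} = \frac{1}{2i}(H_0 - H_0^*) = \Im H_0 > 0$.

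For the non-degeneracy of $Y(s)$ and the positivity $\Im H(s) > 0$, I would run the standard argument: suppose $Y(s_0)\xi = 0$ for some $s_0$ and some $\xi \in \C^n \setminus \{0\}$. Using $Y^*Z - Z^*Y = 2i\,\Im H_0 > 0$ (as a conserved Hermitian form), evaluate on $\xi$: $\langle (Y^*Z - Z^*Y)\xi,\xi\rangle = \langle Z\xi, Y\xi\rangle - \langle Y\xi, Z\xi\rangle$, which vanishes when $Y(s_0)\xi = 0$, contradicting strict positivity of $\Im H_0$. Hence $Y(s)$ is invertible for every $s \in [a_0,b_0]$, so $H = ZY^{-1}$ is defined on the whole interval and the formula together with symmetry follows. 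Finally, for $\Im H(s) > 0$: write $\Im H = \frac{1}{2i}(H - H^*) = \frac{1}{2i}(ZY^{-1} - (Y^{-1})^*Z^*) = \frac{1}{2i}(Y^{-1})^*(Y^*Z - Z^*Y)Y^{-1} = (Y^{-1})^*(\Im H_0)Y^{-1}$, which is positive definite since $\Im H_0$ is and $Y^{-1}$ is invertible. Uniqueness of the Riccati solution is immediate from uniqueness for the linear system (or directly: the difference of two solutions satisfies a linear ODE with zero initial data).

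The only genuine subtlety, and the step I would be most careful with, is the sign/normalization bookkeeping in the two conserved bilinear forms — getting $C^T = C$, $D^T = D$ used in the right places and tracking the factor $\frac{1}{2i}$ so that the conserved Hermitian form is exactly $\Im H_0$ rather than its negative. Everything else is a routine ODE argument, and since $C,D$ have enough regularity there are no analytic obstructions; the regularity of $H$ (namely $H \in \CI^3$, hence $\phi \in \CI^3(\mathcal V)$ as required earlier) follows from $Y,Z \in \CI^3$ and invertibility of $Y$.
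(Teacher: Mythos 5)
Your proposal is correct, and it is essentially the standard argument that the paper itself does not reproduce but outsources to the cited reference \cite[Section 8, Lemmas 8 and 10]{KKL}: linearization $H=ZY^{-1}$, the two conserved forms $Y^TZ-Z^TY$ and $Y^*Z-Z^*Y$, non-degeneracy of $Y$ from positivity of $\Im H_0$, and $\Im H=(Y^{-1})^*(\Im H_0)Y^{-1}$. The only point to make explicit is that the symmetry conclusion (and your identity $W_1(s_-)=H_0-H_0^T=0$) uses that $H_0$ is chosen symmetric, which is implicit in the paper's ansatz $\phi_2=\sum H_{ij}(s)z^iz^j$ with $H_{ij}=H_{ji}$.
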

\begin{lem}
\label{ricB}
The following identity is satisfied:
$$\det(\Im(H(s))\cdot|\det(Y(s))|^2=\det(\Im(H_0)).$$
\end{lem}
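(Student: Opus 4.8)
The plan is to produce a conserved Hermitian quantity built from the fundamental matrices $Y$ and $Z$ of Lemma~\ref{ricA}, and then to re-express it in terms of $H$ by means of the relation $H=ZY^{-1}$. The structural inputs I would use, all already available, are: the matrix $C$ from \eqref{Cmatrix} and the matrix $D=\tfrac14(\pd^2_{ij}\bar g^{11}|_\beta)$ are real and symmetric; $H_0$ is symmetric with $\Im H_0>0$; and, by Lemma~\ref{ricA}, $H(s)$ is symmetric and $Y(s)$ is invertible for every $s\in[a_0,b_0]$, so that $Z(s)=H(s)Y(s)$ holds throughout the interval.

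First I would set $W(s):=\bar Y(s)^\top Z(s)-\bar Z(s)^\top Y(s)$ and differentiate, using $Y'=CZ$ and $Z'=-DY$. The four resulting terms are $\bar Z^\top C^\top Z$, $-\bar Y^\top D Y$, $+\bar Y^\top D^\top Y$, $-\bar Z^\top C Z$, and by $C^\top=C$, $D^\top=D$ they cancel in pairs, so $W$ is constant on $(a_0,b_0)$. Evaluating at $s=s_-$, where $Y(s_-)=I$ and $Z(s_-)=H_0$, and using that $H_0$ is symmetric, gives $W(s_-)=H_0-\overline{H_0}^\top=H_0-\overline{H_0}=2i\,\Im H_0$; hence $W(s)\equiv 2i\,\Im H_0$. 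Next I would substitute $Z=HY$ into $W$: since $H$ is symmetric, $W=\bar Y^\top H Y-\bar Y^\top \overline{H}^\top Y=\bar Y^\top(H-\overline{H})Y=2i\,\bar Y^\top(\Im H)Y$. Comparing the two expressions for $W$ yields $\bar Y^\top(\Im H)Y=\Im H_0$, and taking determinants, together with $\det\bar Y^\top=\overline{\det Y}$, gives $|\det Y|^2\det(\Im H)=\det(\Im H_0)$, which is the assertion.

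The computation is short, so there is no serious obstacle; the one thing that requires care is the correct choice of the conserved quantity $W$, which is precisely the Hermitian symplectic pairing expressing that the columns of $\begin{pmatrix}Y\\ Z\end{pmatrix}$ span a Lagrangian plane for the linearized flow. It is also worth checking explicitly that the nondegeneracy of $Y$ on the \emph{closed} interval $[a_0,b_0]$, as provided by Lemma~\ref{ricA}, is exactly what is needed for $H=ZY^{-1}$ and hence for the substitution $Z=HY$ to be legitimate everywhere, so that no issues arise at the endpoints.
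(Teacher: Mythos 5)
Your proof is correct, and it is the standard argument for this identity (the one given in the reference [KKL, Lemma 10, Section 8] that the paper cites in lieu of a proof): conservation of the Hermitian symplectic pairing $W=\bar Y^\top Z-\bar Z^\top Y$ under the linear flow, followed by the substitution $Z=HY$. All the ingredients you invoke (symmetry and realness of $C$ and $D$, symmetry of $H$, invertibility of $Y$ on $[a_0,b_0]$) are indeed supplied by Lemma~\ref{ricA} and the construction, so there is nothing to add.
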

\noindent Let us make some remarks about the regularity of the solutions $Y(s),H(s)$. Since $C$ is a constant matrix, the matrix $Y(s)$ also satisfies
\bel{Ymatrix}
\frac{d^2}{ds^2} Y=-CD Y,\quad Y(s_-)=I,\quad \dot{Y}(s_-)=CH_0.  
\ee
since $D \in \CI^2([a_0,b_0];\C^{2n})$, we immediately deduce that $Y \in \CI^4([a_0,b_0];\C^{n\times n})$. Now considering the ODE for the function $Z(s)$, we deduce that $Z \in \CI^3([a_0,b_0];\C^{n \times n})$. Finally, since $H(s)=Z(s)Y^{-1}(s)$ and since $Y(s)$ is non-singular on $[a_0,b_0]$, we conclude that
\bel{YHreg}
H \in \CI^3([a_0,b_0];\C^{n \times n})\quad \text{and}\quad \phi \in \CI^3(\mathcal V)
\ee  
in the Fermi coordinates.
\subsection{Construction of the amplitude}
\noindent Let us now study the transport equation \eqref{trans}. First observe that in Fermi coordinates
$$(\Delta_{\g} \phi)|_{\beta}=\sum_{i,j=0}^N \g^{ij}\pd^2_{ij}\phi|_\beta=\sum_{j=2}^{n}\pd^2_{jj}\phi|_\beta=\Tr(CH).$$
 Thus equation \eqref{trans} simplifies to:
\bel{trans11}
2 \partial_s v_0 + (\Tr(CH)-\A(s,0)\dot{\beta})v_0=0\quad s\in[a_0,b_0],
\ee
where $\A\dot{\beta}:= \A \pd_s=\langle \A, dz^1 \rangle_{\g}.$ 
\noindent We proceed to prove that
\bel{transport}
v_0(s)= \det(Y(s))^{-\frac{1}{2}} e^{\frac{1}{2}(\int_{s_{-}}^s \A(\tau,0)\dot\beta \,d\tau)} \quad s \in [a_0,b_0]
\ee
satisfies equation \eqref{trans11}. Indeed, this follows immediately from the observation:
$$\Tr(C(s)H(s))=\Tr(C(s)Z(s)Y(s)^{-1})=\Tr(\frac{dY}{ds}(s) Y(s)^{-1}) = \frac{d}{ds} \log(\det(Y(s))),$$
where we have used the fact that $\frac{dY}{ds}(s)=C(s)Z(s)$. Clearly $v_0 \in \CI^2([a_0,b_0])$, which together with the definition of $v$ implies that $v \in \CI^2(\mathcal V)$. This concludes the construction of the amplitude function and also the construction of $u$ defined by \eqref{WKB} which we refer to as an approximate Gaussian beam.
\subsection{Geometrical interpretations}
\label{geometricinterpretation}
We will briefly discuss some geometrical aspects of the approximate Gaussian beam construction. In particular, we will discuss explicitly, how conjugate points on $(\M,\g)$ manifest themselves in the vector valued function $Y(s)$ constructed above. First we note the following lemma.
\begin{lem}
\label{curvature}
Let $\beta$ be a null geodesic as above and let us consider the Fermi coordinates near $\beta$. We have the following identity:
$$ \frac{\pd^2 {\g}^{11}}{\pd z_i \pd z_j}|_{\beta} =-2\mathcal{R}_{0i0j}|_{\beta},$$
for all indices $i,j \in \{1,\ldots,n\}$, where $\mathcal{R}_{\lambda\mu\nu\kappa}$ denotes the curvature tensor for the Lorentzian manifold $(\hat{\M},\g)$. 
\end{lem}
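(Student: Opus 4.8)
The plan is to compute both sides in the Fermi coordinates provided by Lemma~\ref{fermi} and compare them term by term. Recall that in these coordinates we have, along $\beta$, that $\g|_\beta = 2\,dz^0dz^1 + \sum_{j=2}^n (dz^j)^2$ and, crucially, $\pd_i \g_{jk}|_\beta = 0$ for all $i,j,k$. The latter implies $\pd_i \g^{jk}|_\beta = 0$ as well, since differentiating $\g^{ab}\g_{bc} = \delta^a_c$ gives $\pd_i \g^{ab}|_\beta = -\g^{ap}\g^{bq}\pd_i\g_{pq}|_\beta = 0$. This first-order vanishing is what makes the second-order identity clean: when one differentiates the Christoffel symbols or the curvature tensor, all the "products of first derivatives of the metric" terms drop out on $\beta$.

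First I would write the curvature tensor of $(\hat\M,\g)$ in coordinates. With the convention used for $\mathcal R_{\lambda\mu\nu\kappa}$ in this paper, lowering an index on $R^\rho{}_{\mu\nu\kappa} = \pd_\nu \Gamma^\rho_{\mu\kappa} - \pd_\kappa \Gamma^\rho_{\mu\nu} + \Gamma\Gamma - \Gamma\Gamma$, and evaluating on $\beta$ where all $\Gamma$'s vanish (they are built from $\pd\g$, which vanishes on $\beta$), we get
\[
\mathcal R_{0i0j}|_\beta = \g_{0\rho}\big(\pd_0 \Gamma^\rho_{ij} - \pd_j \Gamma^\rho_{i0}\big)\big|_\beta.
\]
Then I would expand $\Gamma^\rho_{ij} = \tfrac12 \g^{\rho\sigma}(\pd_i \g_{\sigma j} + \pd_j \g_{\sigma i} - \pd_\sigma \g_{ij})$, differentiate once more, and again use $\pd\g|_\beta = 0$ and $\pd\g^{-1}|_\beta = 0$ to kill every term in which a $\pd\g^{-1}$ or an undifferentiated-then-once-differentiated factor survives; what remains is $\g^{\rho\sigma}$ times second derivatives of $\g$. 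Contracting with $\g_{0\rho}$ and using that on $\beta$ the only nonzero entries of $\g^{-1}$ are $\g^{01}=\g^{10}=1$ and $\g^{jj}=1$ for $j\ge 2$ (so $\g_{0\rho}\g^{\rho\sigma} = \delta^\sigma_1$ along $\beta$), this collapses to an expression involving only $\pd^2_{ij}\g_{\,\cdot\,\cdot}$. I expect the bookkeeping to land on
\[
\mathcal R_{0i0j}|_\beta = -\tfrac12 \pd^2_{ij}\g^{11}|_\beta,
\]
after also relating $\pd^2\g^{11}$ to $\pd^2\g_{11}$ via a second differentiation of $\g^{ab}\g_{bc}=\delta^a_c$ — note that since $\pd\g^{-1}|_\beta=0$, one has $\pd^2_{ij}\g^{11}|_\beta = -\g^{1p}\g^{1q}\pd^2_{ij}\g_{pq}|_\beta = -\pd^2_{ij}\g_{11}|_\beta$, so the two formulations are interchangeable.

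The main obstacle is purely organizational: keeping track of which of the many terms in $\pd^2\Gamma$ and in the contracted curvature actually vanish on $\beta$, and fixing sign/index conventions (the placement of indices in $\mathcal R_{\lambda\mu\nu\kappa}$, whether it is $R_{0i0j}$ or $R_{i0j0}$, etc.) so that the factor is exactly $-2$ and not $\pm\tfrac12$ or $\pm 1$. A clean way to organize this, and the route I would actually take in the write-up, is to invoke the standard coordinate formula for the curvature tensor \emph{in normal-type coordinates}: in any coordinate system where $\pd\g|_p = 0$ at a point (or along a curve) $p$, one has the well-known identity $\mathcal R_{\lambda\mu\nu\kappa}|_p = \tfrac12(\pd^2_{\mu\nu}\g_{\lambda\kappa} + \pd^2_{\lambda\kappa}\g_{\mu\nu} - \pd^2_{\lambda\nu}\g_{\mu\kappa} - \pd^2_{\mu\kappa}\g_{\lambda\nu})|_p$. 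Applying this with $(\lambda,\mu,\nu,\kappa) = (0,i,0,j)$ and using that $\g_{00}|_\beta = 0$ is constant along transversal directions to order one while its transversal Hessian... — more carefully, $\g_{0\cdot}$ and $\g_{\cdot 0}$ terms: $\pd^2_{0i}\g_{0j}$ and $\pd^2_{ij}\g_{00}$ enter. The cross terms $\pd^2_{00}\g_{ij}$ etc. with a $\pd_s^2$ are not obviously zero, but the claim is that the full combination reduces to $-\tfrac12\pd^2_{ij}\g^{11}$; I would verify this reduction by noting $\g_{00}\equiv 0$ identically in a neighborhood is \emph{not} true, so one must genuinely use the specific structure, and then cross-check the final constant against the known Jacobi equation for Gaussian beams (the matrix $D = \tfrac14\pd^2\g^{11}|_\beta$ in \eqref{riccati} must equal $-\tfrac12 \mathcal R_{0\cdot 0\cdot}|_\beta$, which is precisely the statement that \eqref{riccati} is the matrix Jacobi equation along $\beta$). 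That consistency check with Lemma~\ref{ricA} and equation \eqref{riccati} is the cleanest sanity test that the constant $-2$ is correct.
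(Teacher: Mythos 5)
Your overall strategy (evaluate the curvature at points of $\beta$, where all Christoffel symbols vanish, and reduce everything to second derivatives of the metric) is the same as the paper's, but two steps in your write-up are wrong or left unresolved, and they are exactly where the content of the lemma lies. First, your identity $\pd^2_{ij}\g^{11}|_\beta = -\g^{1p}\g^{1q}\pd^2_{ij}\g_{pq}|_\beta$ is correct, but its evaluation is not: along $\beta$ the metric is $2\,ds\,dr+\sum_{j\ge2}(dz^j)^2$, so the inverse satisfies $\g^{1p}|_\beta=\delta^p_0$ (index $1$ raises to index $0$, not to itself). Hence $\pd^2_{ij}\g^{11}|_\beta=-\pd^2_{ij}\g_{00}|_\beta$, \emph{not} $-\pd^2_{ij}\g_{11}|_\beta$; the latter is instead $-\pd^2_{ij}\g^{00}|_\beta$, a genuinely different quantity. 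This is not cosmetic: the lemma is precisely the statement $\mathcal{R}_{0i0j}|_\beta=\tfrac12\pd^2_{ij}\g_{00}|_\beta=-\tfrac12\pd^2_{ij}\g^{11}|_\beta$ (in the paper's curvature convention), and if you track $\g_{11}$ instead of $\g_{00}$ you prove a different, false identity. Second, after inserting $(\lambda,\mu,\nu,\kappa)=(0,i,0,j)$ into the flat-point formula you are left with the mixed terms $\pd^2_{0i}\g_{0j}$, $\pd^2_{0j}\g_{0i}$, $\pd^2_{00}\g_{ij}$, which you yourself note are ``not obviously zero'' and never dispose of. They do vanish, but for a reason you must state: Lemma~\ref{fermi} gives $\pd_k\g_{ab}=0$ at \emph{every} point of $\beta$, and $\pd_0=\pd_s$ is tangent to $\beta$, so $\pd_0(\pd_k\g_{ab})|_\beta=0$; this is the step the paper carries out in the form $\pd_0\bar\Gamma^k_{ij}|_\beta=0$. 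Without it the curvature on $\beta$ does not collapse to the single term $\pd^2_{ij}\g_{00}$.

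Finally, your proposed way of pinning down the constant $-2$ --- checking consistency with the Riccati equation \eqref{riccati} by demanding that it be the matrix Jacobi equation along $\beta$ --- is circular: in the paper the identification of \eqref{riccati} (equivalently \eqref{Ymatrix}) with the Jacobi equation is \emph{deduced from} this lemma in the geometric-interpretation subsection, so it cannot serve as an independent verification of the sign or normalization. The sign must instead be fixed by committing to the curvature convention actually used in the paper, namely $\mathcal{R}_{0i0j}=\langle\nabla^{\g}_{\pd_0}\nabla^{\g}_{\pd_i}\pd_0,\pd_j\rangle_{\g}-\langle\nabla^{\g}_{\pd_i}\nabla^{\g}_{\pd_0}\pd_0,\pd_j\rangle_{\g}$, and carrying the computation through with it (the first term dies because $\pd_0\bar\Gamma|_\beta=0$, the second gives $-\g_{jk}\pd_i\bar\Gamma^k_{00}|_\beta=\tfrac12\pd^2_{ij}\g_{00}|_\beta$).
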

\begin{proof}
First note that by Bianchi identities the expression is clearly symmetric with respect to indices $i,j$. We let $\nabla^{\g}$ and $\bar{\Gamma}^{i}_{jk}$ denote the Levi-Civita connection and the Christoffel symbol for $(\M,\g)$ respectively. Recall that $\g^{ij}$ denotes the inverse of the matrix $\g_{ij}$ and since $\g|_{\beta}=2dz^0dz^1+(dz^2)^2+\ldots+(dz^n)^2$ it follows that
$$ \frac{\pd^2 {\g}^{11}}{\pd z_i \pd z_j}|_{\beta} =-\frac{\pd^2 {\g}_{00}}{\pd z_i \pd z_j}|_{\beta}.$$
Since $[\pd_i,\pd_j]=0$, we have
$$\mathcal{R}_{0i0j}|_\beta=\langle \nabla^{\g}_{\pd_0}\nabla^{\g}_{\pd_i} \pd_0, \pd_j \rangle_{\g}|_\beta -\langle \nabla^{\g}_{\pd_i}\nabla^{\g}_{\pd_0} \pd_0, \pd_j \rangle_{\g}|_\beta.$$
Using the definition of the Levi-Civita connection we have 
\bel{curvature1}
\nabla^{\g}_{\pd_0}\nabla^{\g}_{\pd_i} \pd_0= \sum_{k=0}^n(\pd_0 \bar{\Gamma}^k_{i0}) \pd_k +\sum_{k,m=0}^n \bar{\Gamma}^k_{i0} \bar{\Gamma}^{m}_{0k} \pd_m,
\ee
and
\bel{curvature2}
\nabla^{\g}_{\pd_i}\nabla^{\g}_{\pd_0} \pd_0= \sum_{k=0}^n(\pd_i \bar{\Gamma}^k_{00}) \pd_k +\sum_{k,m=0}^n \bar{\Gamma}^k_{00} \bar{\Gamma}^{m}_{ik} \pd_m.
\ee
Recall that $\pd_{i}g_{jk}|_{\beta}=0$. This implies that $\bar{\Gamma}^i_{jk}|_{\beta}=0$ but since $\pd_0$ also denotes the tangent vector to $\beta$, we observe additionally that $\pd_{0} \Gamma^i_{jk}|_\beta=0$ which implies that \eqref{curvature1} vanishes along $\beta$. Hence we obtain
\[ 
\begin{aligned}
\mathcal{R}_{0i0j}|_\beta&= -\langle \sum_{k=0}^n \pd_i \bar{\Gamma}^k_{00}\pd_k, \pd_j \rangle_{\g}|_\beta=-(\sum_{k=0}^n \bar{g}_{jk}\pd_i \bar{\Gamma}^{k}_{00})|_\beta=\frac{1}{2}(\sum_{k,m=0}^n \g_{jk}\g^{km}\pd^2_{im}\g_{00})|_\beta\\
&=\frac{1}{2}(\sum_{m=0}^n \delta_{jm}\pd^2_{im}\g_{00})|_\beta=\frac{1}{2}\pd^2_{ij}\g_{00}|_\beta=-\frac{1}{2}\pd^2_{ij}\g^{11}|_\beta.
\end{aligned}
\]
\end{proof}
\noindent We can next use the above lemma in conjunction with the product structure of the Lorentzian manifold $\M$ to derive the following corollary.
\begin{cor}
\label{product} 
For any null geodesic $\beta$ as above, we have the following 
$$ \frac{\pd^2 \g^{11}}{\pd z_1 \pd z_i}|_{\beta} =0,$$
for all indices $i \in \{1,\ldots,n\}$. 
\end{cor}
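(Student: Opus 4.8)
The plan is to exploit the product structure $\g = -dt^2 + g$ together with Lemma~\ref{curvature}, which identifies $\pd^2_{ij}\g^{11}|_\beta$ with $-2\mathcal R_{0i0j}|_\beta$. Specializing the index $j$ to $1$ (the $r$-coordinate), it suffices to show that the Lorentzian sectional curvature component $\mathcal R_{0i0j}|_\beta$ vanishes whenever either $i$ or $j$ equals $1$; by the symmetry already noted in the proof of Lemma~\ref{curvature} (Bianchi), it is enough to handle $\mathcal R_{010j}|_\beta$ for arbitrary $j$.

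First I would record how the curvature tensor of $\M = [0,T]\times M$ with $\g = -dt^2 + g$ decomposes. Since the metric is a direct product of the flat line $([0,T],-dt^2)$ with $(M,g)$, the curvature tensor $\mathcal R$ of $\g$ is simply the pullback of the curvature tensor $R^g$ of $(M,g)$: any curvature component involving at least one $\pd_t$ slot vanishes identically on all of $\M$ (not just along $\beta$). The key point is then to translate the Fermi coordinate vector fields $\pd_0 = \pd_s$ and $\pd_1 = \pd_r$ back into the splitting. From the construction in Lemma~\ref{fermi}, along $\beta$ we have $\pd_s = \tfrac{1}{\sqrt2}(\pd_t + \dot\gamma)$ and $\pd_r = \tfrac{1}{\sqrt2}(-\pd_t + \dot\gamma)$, so both $\pd_s$ and $\pd_r$ have a nonzero $\pd_t$-component. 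The remaining coordinate vectors $\pd_2,\dots,\pd_n$ are purely spatial along $\beta$ (they equal the parallel frame $e_\alpha$).

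Now expand $\mathcal R_{010j}|_\beta = \mathcal R(\pd_0,\pd_1,\pd_0,\pd_j)|_\beta$ multilinearly using $\pd_0 = \tfrac{1}{\sqrt2}(\pd_t+\dot\gamma)$ and $\pd_1 = \tfrac{1}{\sqrt2}(-\pd_t+\dot\gamma)$. Every resulting term of $\mathcal R(\cdot,\cdot,\cdot,\cdot)$ either contains a $\pd_t$ entry — hence vanishes because $\mathcal R$ is a product curvature with a flat factor — or contains $\dot\gamma$ in both of the first two slots, i.e. a term of the form $\mathcal R(\dot\gamma,\dot\gamma,\cdot,\cdot)$, which vanishes by the antisymmetry of $\mathcal R$ in its first pair of indices. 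Summing, $\mathcal R_{010j}|_\beta = 0$ for every $j$, and by the $i\leftrightarrow j$ symmetry also $\mathcal R_{0i01}|_\beta = 0$. Applying Lemma~\ref{curvature} with $j=1$ (or $i=1$) gives $\pd^2_{z_1 z_i}\g^{11}|_\beta = -2\mathcal R_{0i01}|_\beta = 0$, which is the claim.

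I do not expect a serious obstacle here; the only thing requiring a little care is justifying the product decomposition of the curvature tensor cleanly — either by invoking the standard formula for the curvature of a Riemannian (semi-Riemannian) product, or by arguing directly from the block-diagonal form $\g_{00} = -1$, $\g_{0\alpha}=0$, $\g_{\alpha\beta}=g_{\alpha\beta}(x)$ that all Christoffel symbols with a $0$ index vanish and the remaining ones are the $t$-independent Christoffel symbols of $g$, so $\mathcal R_{0\mu\nu\kappa}\equiv 0$. Once that is in place, the rest is the short multilinear bookkeeping above, using only the two facts that $\mathcal R$ kills the $\pd_t$ direction and that $\mathcal R$ is skew in its first two arguments.
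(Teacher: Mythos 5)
Your argument is correct and is exactly the route the paper intends (it leaves the details unwritten, saying only that the corollary follows from Lemma~\ref{curvature} together with the product structure): the curvature of $\g=-dt^2+g$ annihilates $\pd_t$ in every slot, and along $\beta$ one has $\pd_0=\tfrac{1}{\sqrt2}(\pd_t+\dot\gamma)$, $\pd_1=\tfrac{1}{\sqrt2}(-\pd_t+\dot\gamma)$, so $\mathcal R_{0i01}|_\beta$ reduces to terms killed either by the flat time factor or by the antisymmetry $\mathcal R(\dot\gamma,\dot\gamma,\cdot,\cdot)=0$. No gaps; your multilinear expansion plus the pair symmetry of the curvature tensor is precisely the intended justification.
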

\noindent This corollary can be used to simplify the Riccati equation \eqref{riccati} further. Indeed, Corollary~\ref{product} implies that $D_{1i}=0$ for $i\in \{1,\ldots,n\}$ and since $C_{1j}=C_{j1}=0$ for all $j \in \{1,\ldots,n\}$, we can simply take $H_{11}=s+c_0$ for any constant $c_0$ with $\Im(c_0)>0$, $H_{1j}=H_{j1}=0$ for all $j>1$ and take $H_{i+1,j+1}:=\tilde{H}_{i,j}$ for all $i,j \in \{1,\ldots,n-1\}$ where $\tilde{H}$ is a symmetric $(n-1)\times(n-1)$ matrix satisfying
\bel{riccati1}
\frac{d}{ds} \tilde{H} + 2\tilde{H}^2 + \tilde{D}=0, \quad \tilde{H}(s_{-})=\tilde{H}_0,
\ee 
with  $\tilde{D}_{ij}=D_{i+1,j+1}$ for all $i,j \in \{1,\ldots,n-1\}$. This observation also simplifies the construction of the matrix $Y$. Indeed, we can take $Y_{11}=c_0$ and $Y_{1j}=Y_{j1}=0$ for all $j \in \{1,\ldots,n\}$. 

Note that a key ingredient in the construction of Gaussian beams is the requirement that the matrix valued function $Y(s)$ is non-singular. This is indeed guaranteed in the above construction as a consequence of choosing $\Im(H_0)>0$. We will briefly discuss what happens when one pursues real valued solutions to this linear system. Recall that $Y(s)$ satisfies equation \eqref{Ymatrix}.
This of course implies that the columns of the matrix $Y$ should also satisfy the same ODE. Let $V$ be one of the columns of $Y$ with representation $V=\sum_{j=1}^n V^j(s)\frac{\pd}{\pd z_j}$. Using the definition of the matrix $D$ and Lemma~\ref{curvature}, we deduce that $\frac{d^2}{ds^2} V^i=\sum_{j=1}^n \mathcal{R}_{0i0j}V^j$. Rearranging the indices and using the Bianchi identities we obtain (recall that on the null geodeisc, $\pd_0=\dot{\beta}$):
$$ \frac{D^2}{ds^2} V+\mathcal{R}(V,\dot{\beta})\dot{\beta}=0.$$
This is the well-known Jacobi equation along $\beta$. We therefore see that the columns of $Y$ are variation fields of some variation of $\beta$ through null geodesics. In particular based on this geometric characterization of $Y$, one can deduce that if there exists a point $\beta(s)$ on the interval $[a_0,b_0]$ that is conjugate to $\beta(a_0)$, then any real valued solution $Y(s)$ to \eqref{Ymatrix} will always become singular at that point (see for example \cite[Section 5.5]{Car92}). Therefore a global geometric optic construction with a real valued phase function can not be achieved in the presence of conjugate points on $(\M,\g)$.


\subsection{Construction of the remainder terms}

With the WKB construction complete, we now return to the task of constructing solutions $u_1,u_2$ to \eqref{Gsol}, concentrating on a null geodesic $\beta \in \mathcal{D}$. In particular, we will construct the remainder terms in equations \eqref{go1}-\eqref{go2}.  We consider the differential operators $L_{\A_1,q_1}$ and $L^*_{\A_2,q_2}$ (formal adjoint of $L_{\A_2,q_2}$ with respect to the real $L^2$ inner product). We can use the previous discussion to obtain two families of approximate solutions given by $e^{i\rho \phi} v_{1}$ and $e^{-i \rho \overline{\phi}} \bar{v}_{2}$ to these differential operators. Indeed, let
$$ F_{1,\rho} = - L_{\A_1,q_1}(e^{i\rho \phi}v_1), \quad \quad F_{2,\rho}=-L^*_{\A_2,q_2}(e^{-i\rho \bar{\phi}}\bar{v_2}).$$
\noindent Applying equation (\ref{conjugate}), we obtain
\bel{f1}
\begin{aligned}
F_{1,\rho}&=-e^{i \rho \phi}[\rho^2 (\mathcal{S}\phi) v_1 -i \rho \mathcal{T}_{\A_1}v_1+L_{\A_1,q_1}v_1],\\
F_{2,\rho}&=-e^{-i \rho \bar{\phi}}[\rho^2 (\overline{\mathcal{S}\phi}) \bar{v}_{2}+i \rho \overline{\mathcal{T}_{-\bar{\A}_{2}}v_2}+L^*_{\A_2,q_2}\bar{v}_{2}].
\end{aligned}
\ee
The phase function $\phi \in \mathcal C^3(\mathcal V)$ is chosen exactly as in Section~\ref{phaseconst} and adapting equation (\ref{trans}) to this case, we make the following ansatz for $v_1,v_2$:
$$v_i = \rho^{\frac{n}{4}} v_{i,0}(s) \chi(\frac{|z'|}{\delta}) \quad \text{for $ i=1,2$},$$
\noindent such the functions $v_{i,0}(s)$ satisfy the following transport equations:
\bel{trans1}
\begin{aligned}
2 \partial_s v_{1,0} + (\Tr{(CH)}-\A_{1}\dot{\beta})v_{1,0}&=0,\quad s \in [a_0,b_0] \\
2 \partial_s v_{2,0} + (\Tr{(CH)}+\bar{\A}_{2}\dot{\beta})v_{2,0}&=0,\quad s \in [a_0,b_0].
\end{aligned}
\ee 
\noindent Using \eqref{transport} we have:
\bel{trans2}
\begin{aligned}
v_{1,0}(s)&=\det(Y(s))^{-\frac{1}{2}} e^{\frac{1}{2}(\int_{s_-}^s (\A_{1}\dot{\beta})(\tau,0) \,d\tau)},\\
v_{2,0}(s)&=\det(Y(s))^{-\frac{1}{2}} e^{-\frac{1}{2}(\int_{s_-}^s (\bar{\A}_{2}\dot{\beta})(\tau,0) \,d\tau)}.
\end{aligned}
\ee

Note that $F_{1,\rho},F_{2,\rho}$ are compactly supported in a small tubular region around the null geodesic where the Fermi coordinates are well defined. Also recall from the previous discussions that $v_1,v_2 \in \CI^2(\mathcal V)$ in the Fermi coordinates. 
\noindent Next we define the expression $R_{j,\rho}$, $j=1,2$, as the solution of the following IBVP
\bel{eqgo1}\left\{ \begin{array}{rcll} L_{\A_1,q_1}R_{1,\rho} & =  F_{1,\rho},&  (t,x) \in \M ,\\
R_{1,\rho}(0,x)=0,\ \ \partial_t R_{1,\rho}(0,x)&=0,& x\in M\\
 R_{1,\rho}(t,x)=0,& \  & (t,x) \in (0,T)\times \pd M ,& \end{array}\right.
\ee
\bel{eqgo2}\left\{ \begin{array}{rcll} L_{\A_2,q_2}^*R_{2,\rho} & =  F_{2,\rho},&  (t,x) \in \M ,\\
R_{2,\rho}(T,x)=0,\ \ \partial_t R_{2,\rho}(T,x)&=0,& x\in M\\
 R_{2,\rho}(t,x)=0,& \  & (t,x) \in (0,T)\times \pd M .& \end{array}\right.
\ee

\noindent The energy estimate \eqref{energy} in Section~\ref{prem} implies that equations \eqref{eqgo1} and \eqref{eqgo2} admit unique solutions 
$$R_{j,\rho}\in  \mathcal C([0,T];H^1_0( M))\cap \mathcal C^1([0,T];L^2( M))\quad j=1,2,$$ 
with the estimates:
\bel{energyremainder}
\|R_{j,\rho}\|_{H^1(\M)} \leq C \|F_{j,\rho}\|_{L^2(\M)}.
\ee
We claim that $R_{j,\rho}$, $j=1,2$, satisfy the following decay property
\bel{GO2} \lim_{\rho\to+\infty}(\norm{R_{j,\rho}}_{L^2(\M)}+\rho^{-1}\norm{R_{j,\rho}}_{H^1(\M)})=0,\ee

\noindent and showing this completes the construction of the solutions $u_1,u_2$ of \eqref{Gsol}. Note that for $j=1,2$, using \eqref{eikonal}, \eqref{trans} we have the following bounds:
\bel{ansatz bound}
\begin{aligned}
&\|v_j\|_{\mathcal C^2(\M)} \leq C \rho^{\frac{n}{4}},\\
&| \mathcal{T}_{\A_{j}} v_j| \leq C  \rho^{\frac{n}{4}}|z'|\chi(\frac{|z'|}{\delta}),\\
&|\mathcal{S}\phi| \leq C |z'|^2 \mathcal{N}(|z'|) \quad \text{for $z \in \mathcal V$},
\end{aligned}
\ee  
\noindent where $C>0$ only depending on the geometry and $\|\A_1\|_{\CI^1},\|\A_2\|_{\CI^1}$. Here, $\mathcal{N}$ denotes a continuous function depending on the geometry $(\hat{\M},\g)$ and such that $\mathcal{N}(0)=0$. Note that equation \eqref{postivity} implies that:
\bel{quad}
|e^{i\rho \phi}|=|e^{-i\rho \bar{\phi}}| \leq e^{-D \rho |z'|^2},
\ee
\noindent with $D>0$ independent of $\rho$ and only depending on the geometry. In particular these estimates imply that:
\bel{roughbounds}
\begin{aligned}
\|\rho^{\frac{n}{4}} e^{i\rho \phi}\|^2_{L^2(\mathcal V)} &\lesssim \int_{\mathcal{V}}  \rho^{\frac{n}{2}} e^{-D\rho|z'|^2}\chi^2(\frac{|z'|}{\delta})dz = O(1),\\
\|\rho^{\frac{n}{4}}(\mathcal{S}\phi) e^{i\rho \phi}\|^2_{L^2(\mathcal V)} &\lesssim \int_{\mathcal{V}}   |z'|^4 \rho^{\frac{n}{2}}\mathcal{N}^2(|z'|) e^{-D\rho|z'|^2}\chi^2(\frac{|z'|}{\delta})dz = o(\rho^{-2}),\\
\| (\mathcal{T}_{\A_{j}} v_j) e^{i\rho \phi}\|^2_{L^2(\mathcal V)} &\lesssim \int_{\mathcal{V}} \rho^{\frac{n}{2}} |z'|^2 e^{-D\rho|z'|^2}\chi^2(\frac{|z'|}{\delta})dz = O(\rho^{-1}),
\end{aligned}
\ee
Combining these bounds with \eqref{f1}, we find
\bel{bulknorm}\norm{F_{j,\rho}}_{L^{2}(\M)}= o(\rho),\ j=1,2,\ee

\noindent and using the estimate \eqref{energyremainder}, we deduce that
$$\lim_{\rho\to+\infty}\rho^{-1}\norm{R_{j,\rho}}_{H^1(\M)}\leq C\lim_{\rho\to+\infty}\rho^{-1}\norm{F_{j,\rho}}_{L^2(\M)}=0,\quad j=1,2.$$
Therefore, in order to prove the bound \eqref{GO2}, it only remains to prove that
\bel{GObis}\lim_{\rho\to+\infty}\norm{R_{j,\rho}}_{L^2(\M)}=0,\quad j=1,2.\ee
Let us begin by stating the following two lemmas that will simplify the proof of the estimate \eqref{GObis}.
\begin{lem}
\label{nonzero}
Let $\phi,\mathcal{V}$ be as above. Then, $\pd_t \phi$ does not vanish in $\overline{\mathcal{V}}$. 
\end{lem}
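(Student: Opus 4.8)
The plan is to reduce the claim to the behavior of the phase function $\phi$ restricted to the null geodesic $\beta$, and then to a straightforward examination of its low-order Taylor coefficients in the transversal directions. Recall from Section~\ref{phaseconst} that, in the Fermi coordinates $z=(z^0,z^1,\ldots,z^n)=(s,r,z'')$ associated with $\beta$, the phase is $\phi = \phi_0 + \phi_1 + \phi_2$ with $\phi_0 \equiv 0$, $\phi_1 = r = z^1$, and $\phi_2 = \sum_{1\le i,j\le n} H_{ij}(s)\, z^i z^j$, where $H(s)$ solves the Riccati equation \eqref{riccati}. Since $\phi$ is a polynomial in the transversal variables $z'=(z^1,\ldots,z^n)$ with coefficients depending only on $s$, and $\overline{\mathcal V}$ is a compact tubular neighborhood of $\beta$ of transversal radius $\delta$, it suffices to show that $\pd_t\phi$ is bounded away from zero on $\beta$ itself — i.e. at $z'=0$ — and then to shrink $\delta$ so that the bound persists on all of $\overline{\mathcal V}$ by continuity (uniform continuity of $\pd_t\phi$ on the compact set, using the $\CI^3$ regularity \eqref{YHreg}).

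The key computation is therefore the evaluation of $\pd_t\phi$ along $\beta$. First I would express $\pd_t = \pd_{x^0}$ in terms of the Fermi coordinate vector fields. From the construction of the coordinate change $\mathcal F_2$ in Lemma~\ref{fermi}, one has $s = \frac{1}{\sqrt 2}(t+y^1) + \frac a2$ and $r = \frac{1}{\sqrt 2}(-t+y^1)+\frac a2$, and since $\Pi\circ\mathcal F_1$ preserves the $t$-coordinate, $y^1$ is a spatial coordinate independent of $t$; hence $\pd_t = \frac{1}{\sqrt 2}\pd_s - \frac{1}{\sqrt 2}\pd_r$ as coordinate vector fields. Consequently
\[
\pd_t\phi = \frac{1}{\sqrt 2}\,\pd_s\phi - \frac{1}{\sqrt 2}\,\pd_r\phi.
\]
Evaluating on $\beta$, i.e. at $z'=0$: since $\phi_0=0$ and $\phi_1=r$ contribute $\pd_s\phi_0=\pd_s\phi_1=0$ and $\pd_r\phi_1 = 1$, while $\phi_2$ is quadratic in $z'$ so that $\pd_s\phi_2|_{z'=0}=0$ and $\pd_r\phi_2|_{z'=0}=0$, we obtain $\pd_t\phi|_\beta = \frac{1}{\sqrt 2}(0) - \frac{1}{\sqrt 2}(1) = -\frac{1}{\sqrt 2}\ne 0$. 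Thus $\pd_t\phi$ equals $-1/\sqrt 2$ identically on $\beta$.

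Finally, to pass from $\beta$ to $\overline{\mathcal V}$: write $\pd_t\phi(s,z') = -\frac{1}{\sqrt 2} + \sum_{k=1}^n c_k(s)\,z^k + (\text{terms quadratic in } z')$ where the $c_k$ are bounded on $[a_0,b_0]$ (they are built from $\pd_s H$ and $H$, which lie in $\CI^2$ and $\CI^3$ respectively by \eqref{YHreg}), so $|\pd_t\phi(s,z') + \tfrac{1}{\sqrt2}| \le C(\delta + \delta^2)$ on $\mathcal V$ for a constant $C$ depending only on the geometry. Choosing $\delta$ small enough — which is permitted since the size of $\delta$ was left free, subject only to $0<\delta<\delta'$ and the requirement that $\mathcal V$ avoid $\{0\}\times M$ and $\{T\}\times M$ — gives $|\pd_t\phi| \ge \frac{1}{2\sqrt 2}$ on $\overline{\mathcal V}$, which proves the lemma. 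I do not anticipate any real obstacle here; the only mild subtlety is making sure the identification $\pd_t = \frac{1}{\sqrt2}(\pd_s - \pd_r)$ is carried out at the level of coordinate vector fields on the chart $(U,\Phi)$ rather than along a curve, but this is immediate from the affine, $t$-independent nature of the map $\mathcal F_2$ together with the product structure of $\mathcal F_1$.
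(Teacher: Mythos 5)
Your proposal is correct and follows essentially the same route as the paper: both express $\pd_t=\tfrac{1}{\sqrt2}(\pd_s-\pd_r)$ via the affine change of coordinates, use $\phi=r+\sum_{i,j}H_{ij}(s)z^iz^j$ to see that $\sqrt2\,\pd_t\phi=-1+O(|z'|)$, and conclude by taking $\delta$ small. The only cosmetic difference is that you evaluate on $\beta$ first and then invoke continuity, whereas the paper writes out the full polynomial expression directly; the content is identical.
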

\begin{proof}
Since we are considering the neighborhood $\mathcal{V}$ we may use the Fermi coordinate system $(s,z')$. Recall that in this coordinate system $\phi(z)=z^1+ H_{ij}(z^0)z^iz^j=r+H_{ij}(s)z^iz^j$ where $i,j \in \{1,\ldots,n\}$. Therefore:
$$\sqrt{2} \partial_t \phi = \partial_s \phi - \partial_r \phi =-1 + \sum_{1\leq i,j\leq n} \dot{H}_{ij}(s)z^iz^j + 2 \sum_{i=1}^n H_{i1}(s)z^i,$$ 
which implies that for $|z'|=\sqrt{|z^1|^2+\ldots+|z^n|^2}<\delta$ sufficiently small we have 
\bel{}
 |\partial_t \phi| > \frac{1}{2}.
\ee
\end{proof}
\begin{lem}
\label{keyest}
Let $\phi$ be as above and suppose that $f \in \CI(\mathcal{V})$ with $\supp{f} \subset \mathcal V$. The following estimate holds:
\bel{}
\lim_{\rho \to \infty}\rho^{\frac{n}{4}}\|\int_0^t f(\tau,\cdot) e^{i\rho \phi(\tau,\cdot)}\,d\tau \|_{L^2(\M)} = 0.
\ee 
\end{lem}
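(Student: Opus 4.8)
The plan is to exploit the non-vanishing of $\pd_t\phi$ from Lemma~\ref{nonzero} in order to integrate by parts in the $\tau$ variable and gain a factor of $\rho^{-1}$, which will beat the $\rho^{n/4}$ prefactor once combined with the $L^2$-smallness of the Gaussian $e^{i\rho\phi}$ established in \eqref{roughbounds}. Concretely, on $\supp f \subset \mathcal V$ we may work in Fermi coordinates, where $\sqrt 2\,\pd_\tau\phi = \pd_s\phi - \pd_r\phi$ is bounded away from zero (in modulus $>1/2$) and is $\CI^2$. Hence we can write, for fixed $z'$,
\[
e^{i\rho\phi(\tau,z')} = \frac{1}{i\rho\,\pd_\tau\phi(\tau,z')}\,\pd_\tau\bigl(e^{i\rho\phi(\tau,z')}\bigr),
\]
and substitute this into $\int_0^t f(\tau,z')\,e^{i\rho\phi(\tau,z')}\,d\tau$. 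Integrating by parts in $\tau$ produces a boundary term at $\tau=t$ (the term at $\tau=0$ vanishes since $\mathcal V$ does not meet $\{t=0\}$, so $f(0,\cdot)\equiv 0$) of the form $\frac{1}{i\rho}\,\frac{f(t,z')}{\pd_t\phi(t,z')}\,e^{i\rho\phi(t,z')}$, plus an interior integral $-\frac{1}{i\rho}\int_0^t \pd_\tau\!\bigl(\frac{f}{\pd_\tau\phi}\bigr)(\tau,z')\,e^{i\rho\phi(\tau,z')}\,d\tau$.

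Next I would estimate the $L^2(\M)$ norm of each of the two resulting pieces. For the boundary term, $\frac{f(t,\cdot)}{\pd_t\phi(t,\cdot)}$ is a bounded compactly supported function on $\mathcal V$, so by \eqref{quad} and the first line of \eqref{roughbounds} we get
\[
\rho^{n/4}\,\Bigl\| \frac{1}{i\rho}\,\frac{f(t,\cdot)}{\pd_t\phi(t,\cdot)}\,e^{i\rho\phi(t,\cdot)} \Bigr\|_{L^2(\M)}
\lesssim \rho^{-1}\,\bigl\| \rho^{n/4} e^{i\rho\phi} \bigr\|_{L^2(\mathcal V)} = O(\rho^{-1}).
\]
For the interior term, one uses Minkowski's integral inequality in $\tau$ together with the pointwise Gaussian bound $|e^{i\rho\phi(\tau,z')}|\le e^{-D\rho|z'|^2}$: since $\pd_\tau(f/\pd_\tau\phi)$ is bounded on the compact set $\overline{\mathcal V}$ (here we use that $\phi\in\CI^3$ and $\pd_t\phi$ is bounded below, so $1/\pd_\tau\phi$ is $\CI^2$ and $f/\pd_\tau\phi$ has a bounded $\tau$-derivative), the same Gaussian integral $\int_{\mathcal V}\rho^{n/2}e^{-2D\rho|z'|^2}\,dz = O(1)$ gives that the $L^2$ norm of the interior integral is also $O(\rho^{-1})$ after multiplication by $\rho^{n/4}$. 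Summing the two contributions yields $\rho^{n/4}\|\int_0^t f(\tau,\cdot)e^{i\rho\phi(\tau,\cdot)}\,d\tau\|_{L^2(\M)} = O(\rho^{-1}) \to 0$, which is the claim.

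The step I expect to require the most care is the interior term: one has to justify moving the $L^2_x$ norm inside the $\tau$-integral (Minkowski), keep track of the support in $z'$ uniformly in $\tau$ so that the Gaussian concentration argument of \eqref{roughbounds} applies verbatim, and confirm that differentiating $f/\pd_\tau\phi$ in $\tau$ does not destroy boundedness — which is exactly where the $\CI^3$ regularity of $\phi$ and the lower bound $|\pd_t\phi|>1/2$ from Lemma~\ref{nonzero} enter. Everything else is a routine one-step integration by parts plus the already-established Gaussian estimates. (Strictly speaking the stated bound is $o(1)$, and the argument above in fact gives the stronger $O(\rho^{-1})$, which certainly suffices.)
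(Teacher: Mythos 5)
There is a genuine gap: your integration by parts is applied directly to $f$, and the interior term $-\frac{1}{i\rho}\int_0^t \pd_\tau\bigl(\tfrac{f}{\pd_\tau\phi}\bigr)e^{i\rho\phi}\,d\tau$ requires $\pd_\tau f$ to exist and be bounded. But the hypothesis of the lemma is only $f\in\CI(\mathcal V)$, i.e.\ $f$ is merely \emph{continuous} (recall $\CI=\mathcal C$ in this paper), and this weak hypothesis is essential for how the lemma is used: in the proof of \eqref{GObis} it is applied to $f=L_{\A_1,q_1}v_1$ and to $f=\pd_t\bigl(\tfrac{\mathcal T_{\A_1}v_1}{\pd_t\phi}\bigr)$, which involve $q_1\in\CI(\M)$, second derivatives of $v_1\in\CI^2$, and third derivatives of $\phi\in\CI^3$, hence are only continuous and cannot be differentiated once more in $\tau$. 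So the step you yourself flag as delicate is exactly the one that fails under the stated hypotheses; your parenthetical claim of the stronger rate $O(\rho^{-1})$ is a symptom of having implicitly upgraded $f$ to $\CI^1$.

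The paper repairs this by a mollification at scale $\rho^{-1/4}$: it sets $f_\rho=f*\zeta_\rho$, so that $\|f_\rho-f\|_{\CI(\M)}\to0$ while $\|f_\rho\|_{W^{1,\infty}}\lesssim\rho^{1/4}$, and splits the quantity into $I_1$ (with $f_\rho$) and $I_2$ (with $f-f_\rho$). Your integration-by-parts argument is then carried out verbatim on $I_1$, where the derivative of $f_\rho/\pd_\tau\phi$ costs an extra $\rho^{1/4}$ and still yields $O(\rho^{-3/4})$ after the Gaussian $L^2$ bound from \eqref{roughbounds}; the error piece $I_2$ is estimated with no integration by parts at all, via Cauchy--Schwarz in $\tau$ and $\|\rho^{n/4}e^{i\rho\phi}\|_{L^2(\mathcal V)}=O(1)$, giving only $o(1)$ governed by the modulus of continuity of $f$. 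Everything else in your argument (vanishing of the $\tau=0$ boundary term because $\mathcal V$ avoids $\{0\}\times M$, the lower bound $|\pd_t\phi|>\tfrac12$ from Lemma~\ref{nonzero}, Minkowski plus the Gaussian concentration) is consistent with the paper's proof; the missing ingredient is the regularization that makes the integration by parts legitimate for merely continuous $f$, at the price of weakening the conclusion from $O(\rho^{-1})$ to the stated $o(1)$.
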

\begin{proof}
Note that since $f$ is supported near the null geodesic $\beta$, we may use the Fermi coordinate system $(s,z')$. Define $\zeta_\rho:\R^{n+1}\to\R$ through  
\[ \zeta_\rho(x) = \kappa^{-1} \rho^{\frac{n+1}{4}}\chi(\rho^{\frac{1}{4}}|x|),\]
with $\chi$ defined as in \eqref{phiv} and $\kappa=\|\chi\|_{L^1(\R^n)}$. We define the smooth functions:
$$ f_{\rho}= f * \zeta_\rho \quad (s,z') \in \mathcal{V}. $$
\noindent It is clear that $f_\rho$ is supported near the null geodesic $\beta$ and
\bel{mollifier1}
\lim_{\rho\to\infty}\| f_{\rho} - f\|_{\CI(\M)}=0, \quad \quad  \|f_{\rho}\|_{W^{k,\infty}(\mathcal{M})} \leq C_k \rho^{\frac{k}{4}} \quad \forall k \in \mathbb{N}.
\ee
\noindent We write
$$\|\rho^{\frac{n}{4}}\int_0^t f(\tau,\cdot) e^{i\rho \phi(\tau,\cdot)} \,d\tau \|_{L^2(\M)}\leq I_1+I_2,$$
where 
$$I_1= \|\rho^{\frac{n}{4}}\int_0^t f_\rho(\tau,\cdot) e^{i\rho \phi(\tau,\cdot)} \,d\tau \|_{L^2(\M)},\quad \text{and}\quad I_2=\|\rho^{\frac{n}{4}}\int_0^t (f-f_{\rho})(\tau,\cdot) e^{i\rho \phi(\tau,\cdot)} \,d\tau \|_{L^2(\M)}.$$
For $I_1$, we integrate by part in time using $e^{i\rho\phi}=\frac{-i}{\rho\pd_t\phi}\pd_te^{i\rho\phi}$ and use Lemma~\ref{nonzero} together with the fact that the set $\mathcal V$ does not intersect the sets $\{0\}\times M$ and $\{T\}\times M$ to obtain
$$ I_1 \leq \| \frac{\rho^{\frac{n}{4}}}{\rho \pd_t \phi} f_{\rho} e^{i\rho \phi} \|_{L^2(\M)}+\|\rho^{\frac{n}{4}-1}\int_0^t \pd_\tau(\frac{f_{\rho}}{\pd_\tau\phi}) e^{i\rho \phi} \,d\tau \|_{L^2(\M)} \lesssim \rho^{-\frac{3}{4}},$$
where we are using the first bound in \eqref{roughbounds} together with equation \eqref{mollifier1} to bound the first term by $\rho^{-1}$ and the second term by $\rho^{-\frac{3}{4}}$. For the term $I_2$, we can use the Cauchy Schwarz inequality along with inequalities \eqref{roughbounds} and equation \eqref{mollifier1} to obtain
$$I_2 \leq T \|\rho^{\frac{n}{4}}(f-f_{\rho}) e^{i\rho \phi} \|_{L^2(\M)}\leq T\| f_{\rho} - f\|_{\CI(\M)}\cdot \|\rho^{\frac{n}{4}}e^{i\rho\phi}\|_{L^2(\mathcal V)} \to 0 \quad \text{as} \quad \rho \to \infty.$$
\end{proof}

\begin{proof}[Proof of Estimate~\eqref{GObis}]
The result for $R_{1,\rho}$ and $R_{2,\rho}$ being similar, we will only give a proof for $R_{1,\rho}$. Using the energy estimate \eqref{energy_int} in Section~\ref{prem}, together with the fact that $R_{1,\rho}$ solves equation \eqref{eqgo2}, it suffices to prove that $ \lim_{\rho \to \infty} \| F_{*,\rho}\|_{L^2(\M)} = 0$, where $F_{*,\rho}(t,x):=\int_0^t F_{1,\rho}(\tau,x)\,d\tau$. We have:
\bel{l1n}
F_{*,\rho}(t,x)=I_1(t,x)+I_2(t,x)+I_3(t,x),
\ee
where
\[
\begin{aligned}
I_1(t,x)&= -\int_0^t  e^{ i\rho\phi}\left[\rho^2 (\mathcal{S}\phi)(\tau,x) v_1(\tau,x)\right]\, d\tau \\
I_2(t,x)&=\int_0^t e^{i\rho\phi}\left[i\rho \mathcal{T}_{\A_1} v_1(\tau,x) \right]\, d\tau \\
I_3(t,x)&= -\int_0^t  e^{ i\rho\phi}\left[ (L_{\A_1,q_1})v_1(\tau,x) \right]\, d\tau.
\end{aligned}
\]
\noindent We proceed to bound each of the above integrals using the Fermi coordinates $(s,z')$ around $\beta$. This can be done since each of the above integrands is supported in a small tubular neighborhood of $\beta$. For the first integral, $I_1$, we apply integration by parts to obtain
\bel{I_1} \begin{aligned} I_1(t,x)&=\int_0^t \frac{i\rho^2}{\rho \partial_\tau \phi} (\partial_\tau e^{ i\rho\phi})(\mathcal{S}\phi)(\tau,x) v_1(\tau,x)\, d\tau \\
&= \frac{i\rho e^{i\rho\phi}}{\partial_t \phi} (\mathcal{S}\phi)(t,x) v_1(t,x) +\int_0^t  e^{ i\rho\phi}\left[-i\rho \partial_\tau[(\frac{\mathcal{S}\phi}{\partial_\tau \phi})v_1] \right](\tau,x)\, d\tau\\
&=\frac{i\rho e^{i\rho\phi}}{\partial_t \phi} (\mathcal{S}\phi) v_1 +i\rho \int_0^t e^{i\rho\phi}\frac{\pd^2_\tau \phi}{(\pd_\tau \phi)^2}(\mathcal S\phi)v_1 \,dt- i\rho \int_0^t e^{i\rho \phi}\frac{\pd_\tau[(\mathcal S\phi)v_1]}{\pd_\tau \phi}\,dt\\
&:=S_1(t,x)+S_2(t,x)+S_3(t,x).
\end{aligned}
\ee
\noindent Using \eqref{roughbounds} we deduce that $\|S_1\|_{L^2(\M)} = o(1)$. For $S_2(t,x)$, we first use the Cauchy-Schwarz inequality to observe that
$$\|S_2\|_{L^2(\M)} \leq \rho T \|e^{i\rho\phi}\frac{\pd^2_t \phi}{(\pd_t \phi)^2}(\mathcal S\phi)v_1\|_{L^2(\mathcal V)}.$$
Recalling that $\phi \in \CI^3(\mathcal V)$ and applying the bound \eqref{roughbounds} again, we deduce that $\|S_2\|_{L^2(\M)}=o(1)$. For $S_3$, we integrate by parts again to obtain
$$S_3=-e^{i\rho\phi}\frac{\pd_t[(\mathcal{S}\phi)v_1]}{(\pd_t\phi)^2}+\int_0^t e^{i\rho\phi}\pd_\tau[\frac{\pd_\tau((\mathcal{S}\phi)v_1)}{(\pd_\tau\phi)^2}]\,d\tau:=S_4+S_5.$$
Using equation \eqref{eikonal}, it is clear that $|\pd_t(\mathcal S \phi)|\lesssim |z'|\mathcal N(|z'|)$ and  $|\pd^2_t(\mathcal S \phi)|\lesssim \mathcal N(|z'|)$ on the set $\mathcal{V}$. Using this observation, together with bounds analogous to \eqref{roughbounds}, we deduce that
$\|S_4\|_{L^2(\M)}=o(\rho^{-\frac{1}{2}})$ and $\|S_5\|_{L^2(\M)}=o(1)$.
\noindent Let us now consider the function $I_2(t,x)$. We have:
\bel{I_2} \begin{aligned} I_2(t,x)&=  \int_0^t \frac{-i}{\rho \partial_\tau \phi} \partial_\tau (e^{ i\rho\phi})\left[i\rho \mathcal{T}_{\A_1} v_1(\tau,x)\right]\, d\tau \\
&= \frac{e^{i\rho\phi}}{\partial_t \phi} (\mathcal{T}_{\A_1} v_1)(t,x) -\int_0^t  e^{ i\rho\phi}\left[ \partial_\tau(\frac{\mathcal{T}_{\A_1}v_1}{\partial_\tau \phi}) \right] \, d\tau:=S_6(t,x)+S_7(t,x).
\end{aligned}
\ee
\noindent Using \eqref{roughbounds} again, we deduce that $\|S_6\|_{L^2(\M)} \lesssim \rho^{-\frac{1}{2}}$. For $S_7$, noting that $\partial_t(\frac{\mathcal{T}_{\A_1}v_1}{\partial_t \phi})\in \CI_c(\mathcal V)$, we can use Lemma~\ref{keyest} to show that $\lim_{\rho \to \infty} \|S_7\|_{L^2(\M)}=0$. 
\noindent Finally, for the function $I_3(t,x)$, since $\phi, v_1 \in \CI^2(\mathcal V)$, we have $L_{\A_1,q_1}v_1 \in \CI(\mathcal V)$ and that this function is supported near the null geodesic $\beta$. Using this observation together with Lemma~\ref{keyest}, we conclude that $\lim_{\rho \to \infty} \|I_3\|_{L^2(\M)}=0$. This completes the proof.
\end{proof}

\section{Reduction to the light ray transform}
\label{reductionsection}
In this section we will obtain the light ray transforms of $\A$ and $q$ on null geodesics from the knowledge of the Dirichlet to Neumann map $\Lambda_{\A,q}$ and then use Proposition~\ref{introlightray} to prove Theorem~\ref{t1}. Proposition~\ref{introlightray} will subsequently be proved in the next section.
\subsection{Recovery of the light-ray transform of $\A$}
From now on we fix $q=q_1-q_2$, $\A=\A_1-\A_2$ extended by zero to $(\R\times M)\setminus\M$. We take $\beta$ to be a maximal null geodesic in $\mathcal{D} \subset \M$ and extend it to $\hat{\M}$. For $j=1,2$, we define 
$$u_j\in \mathcal C([0,T];H^1( M))\cap \mathcal C^1([0,T];L^2( M))$$ 
a solution of \eqref{Gsol} taking the form \eqref{go1}-\eqref{go2} with the properties described in the previous section. We fix $u_3$ the solution of \eqref{eq1} with $\A=\A_2$, $q=q_2$ and $f$ given by
$$f(t,x):=u_1(t,x)=v_1(t,x)e^{i\rho \phi},\quad (t,x)\in(0,T)\times\pd M.$$ Then the function $u=u_3-u_1$ solves
\bel{eq3}
\left\{ \begin{array}{ll}  -\Delta_{\g} u +\A_2(t,x)\nabla^{\g}u+ q_2(t,x) u  =  \A \nabla^{\g}u_1+qu_1, & \mbox{in}\ \M,\\  
u  =  0, & \mbox{on}\ (0,T)\times \partial M,\\  
 u(0,\cdot)  =0,\quad  \pd_tu(0,\cdot)  =0 & \mbox{in}\ M.
\end{array} \right.
\ee
Moreover, since $\A_1=\A_2$ on $(0,T)\times \pd M$ by assumption, we have $\pd_{\bar\nu} u=\pd_{\bar\nu} u_3-\pd_{\bar\nu} u_1=\Lambda_{\A_2,q_2}f-\Lambda_{\A_1,q_1}f=0$. Multiplying \eqref{eq3} by $u_2$ and integrating by parts we obtain
\bel{t1e} \int_{\M}[\A\nabla^{\g}u_1u_2+qu_1u_2]\,dV_{\bar{g}}=0,\ee
where $dV_{\bar{g}}(t,x)=|\bar{g}|^{\frac{1}{2}}dt \wedge dx$ denotes the volume form for $\bar{g} = -dt^2+g$. Applying \eqref{GO2}, we obtain
$$0=\lim_{\rho\to+\infty}\rho^{-1}\int_{\M}[\A\nabla^{\g}u_1u_2+qu_1u_2]\,dV_{\bar{g}} =i\lim_{\rho\to+\infty}\int_{\M} (\A\nabla^{\g}\phi) e^{-2\rho \Im(\phi)} v_1\bar{v}_2\,dV_{\bar{g}}.$$
\noindent Recall that the functions $v_1$ and $v_2$ are supported in a small tubular neighborhood of the null geodesic $\beta$. Thus the integrand in the above equation is supported near $\beta$ and we can the use Fermi coordinates $z=(z^0,\ldots,z^n)=(s,z')=(s,r,z^2,\ldots,z^n)=(s,r,z'')$ to compute the limit. We have:
$$ v_1\bar{v}_{2}= \rho^{\frac{n}{2}} v_{1,0}\bar{v}_{2,0} \chi^2(\frac{|z'|}{\delta})= \rho^{\frac{n}{2}} \det(|Y(s)|)^{-1} \mathcal{G}(s) \chi^2(\frac{|z'|}{\delta}),$$
where
$$\mathcal{G}(s)= e^{\frac{1}{2}\int_{s_-}^s (\A(\tau,0)\dot\beta)\,d\tau }.$$  
Note that:
$$  (\A\nabla^{\g}\phi)(s,0) \mathcal{G}(s) = 2 \dot{\mathcal{G}}(s).$$
Using Lemma~\ref{fermi} we have $ ||\det\bar{g}|(s,z')-1| \leq C|z'|^2$. Using the fact that $\A,q=0$ on the set $\hat{\M}\setminus \M$ and the above two obsevations we get
$$\lim_{\rho\to+\infty}\int_{a_0}^{b_0} \int_{|z'|<\delta} \rho^{\frac{n}{2}} \dot{\mathcal{G}}(s) e^{-2\rho \Im(\phi)} (\det |Y(s)|)^{-1} \chi^2(\frac{|z'|}{\delta}) \,ds \wedge dz'=0.$$
Lemma~\ref{ricB} implies that
$$ \int_{|z'|<\delta} \rho^{\frac{n}{2}}e^{-2\rho \Im(\phi)} (\det |Y(s)|)^{-1} \chi^2(\frac{|z'|}{\delta}) \,dz'=C + \mathcal{O}(|\rho|^{-\infty}),$$
where $C\neq0$ is a constant. Combining Fubini's theorem and the above equation we have
$$0=\int_{a_0}^{b_0} \dot{\mathcal{G}}(s) \,ds= \int_{s_-}^{s_+ } \dot{\mathcal{G}}(s) \,ds =\mathcal{G}(s_+) -\mathcal{G}(s_-).$$
Since $\mathcal{G}(s_-)=1$ by definition, we conclude that $ \mathcal{G}(s_+)=1$, which implies that for any null geodesic $\beta \in \mathcal{D}$, we have
$$ \exp(\frac{1}{2}\int_\beta \A)=1.$$
Since $\supp{\A} \subset \mathcal{E}$, we can conclude that:
\bel{DNdataforA}
\frac{1}{4\pi i} \int_\beta \A \in \mathbb{Z},
\ee
 for any maximal null geodesic in $\R \times M$. Now consider equation \eqref{DNdataforA} and write $\beta(t)=(\tilde{t}+t,\gamma(t))$, where $t \in I$ and $\tilde{t}\in\R$. We consider the family of null geodesics $\beta_{s}(t)=(s+t,\gamma(t))$ and note that equation \eqref{DNdataforA} holds for all $s \in\R$. Since $\A$ is of compact support, we must have that $\int_{\beta_s}\A$ vanishes for $|s|$ large. Together with the continuity of $\A$, we can conclude that it must vanish for all $s$. Therefore, for any maximal null geodesic in $\R\times M$ we have
$$\int_\beta \A  = 0.$$
Finally, under the hypothesis of Theorem~\ref{t1}, together with Proposition~\ref{introlightray}, we conclude that the first equation in \eqref{gauge} holds.
\subsection{Recovery of the light ray transform of $q$}
 The previous discussion yields that $\A_1-\A_2=\bar{d}\psi$ for some $\psi \in \CI^2(\M)$ with $\psi|_{\pd \M}=0$. We define $\tilde{\psi}=\frac{1}{2}\psi$, and let  
\bel{}
\begin{aligned}
\tilde{\A}_2(t,x)&=\A_2(t,x)+2 d\tilde{\psi}=\A_1(t,x) \quad \forall(t,x)\in \M,\\
\tilde{q}_2(t,x)&=q_2(t,x)+\Delta_{\g}\tilde{\psi} -\A_2\nabla^{\g}\tilde{\psi}-\left\langle \nabla^{\g}\tilde{\psi},\nabla^{\g}\tilde{\psi}\right\rangle_{\g}\quad \forall(t,x)\in \M.
\end{aligned}
\ee
The gauge invariance of the DN map implies that
\bel{t1aaa}
\Lambda_{\A_1,q_1}=\Lambda_{\tilde{\A}_2,\tilde{q}_2}=\Lambda_{\A_1,\tilde{q}_2}.
\ee
Furthermore, we also know that $\supp{(\tilde{q}_2 - q_1)} \subset \mathcal{E}$.
We now proceed as in the previous section. We start by fixing $q=q_1-\tilde{q}_2$. As before, we take $\beta$ to be a maximal null geodesic in $\mathcal{D} \subset \M$ and extend it to $\hat{\M}$. For $j=1,2$, we define 
$$u_j\in \mathcal C([0,T];H^1( M))\cap \mathcal C^1([0,T];L^2( M)),$$ 
as the Gaussian beam solutions of \eqref{Gsol} (with $\A_2$ replaced with $\tilde{\A}_2$ and $q_2$ replaced with $\tilde{q}_2$), taking the form \eqref{go1}-\eqref{go2}. We fix $u_3$ the solution of \eqref{eq1} with $\A=\tilde{\A}_2$, $q=\tilde{q}_2$ and $f$ given by
$$f(t,x):=u_1(t,x)=v_1(t,x)e^{i\rho \phi},\quad (t,x)\in(0,T)\times\pd M.$$ Then the function $u=u_3-u_1$ solves
\bel{eq3*}
\left\{ \begin{array}{ll}  -\Delta_{\g} u +\tilde{\A}_2(t,x)\nabla^{\g}u+ \tilde{q}_2(t,x) u  = qu_1, & \mbox{in}\ \M,\\  
u  =  0, & \mbox{on}\ (0,T)\times \partial M,\\  
 u(0,\cdot)  =0,\quad  \pd_tu(0,\cdot)  =0 & \mbox{in}\ M.
\end{array} \right.
\ee
Moreover, \eqref{t1aaa} implies $\pd_{\bar\nu} u=\pd_{\bar\nu} u_3-\pd_{\bar\nu} u_1=\Lambda_{\tilde{\A}_2,\tilde{q}_2}f-\Lambda_{\A_1,q_1}f=0$. Multiplying \eqref{eq3*} by $u_2$ and integrating by parts we obtain
\bel{t1eee}\int_{\M}qu_1u_2\,dV_{\g}=0.\ee
Applying \eqref{GO2}, we obtain
\bel{pr2}
\lim_{\rho \to \infty} \int_{\M}qe^{-2\rho\Im{\phi}}v_{1}\bar{v}_{2}\,dV_{\bar{g}}=0,
\ee
Here, we note that since $\tilde{\A}_2=\A_1$, we have
$$ v_1\bar{v}_{2}= \rho^{\frac{n}{2}} v_{1,0}\bar{v}_{2,0} \chi^2(\frac{|z'|}{\delta})= \rho^{\frac{n}{2}} \det(|Y(s)|)^{-1}\chi^2(\frac{|z'|}{\delta}).$$
Thus, taking the limit $\rho \to \infty$ and using Lemma~\ref{ricB} we deduce that
\bel{}
\int_{t \in I} q(\beta(t)) \,dt =0.
\ee
This equation only holds for maximal null geodesics in $\mathcal D$, but since $\supp{q} \subset \mathcal{E}$, we can conclude that this equation holds for all maximal null geodesics $\beta$ in $\R \times M$. Together with Proposition~\ref{introlightray}, we conclude that $q=0$, thus completing the proof of Theorem~\ref{t1}.


\section{Inversion of the light ray transforms}
\label{inversionsection}
This section is concerned with the proof of Proposition~\ref{introlightray}. We start with the inversion of the light ray transform of a scalar function $q$ satisfying $\supp q \subset \mathcal E$.
\begin{proof}[Proof of statement (i) in Proposition~\ref{introlightray}]
We know that for any maximal geodesic $\beta$ in $\R \times M$, 
$$ \int_{t\in I} q(\beta(t)) \,dt=0.$$ 
Using the identification of maximal null geodesics $\beta=(\tilde{t}+t,\gamma(t))$ with maximal geodesics in $M$, we conclude that
\bel{lightrayQ}
\mathcal{L}q(\tilde{t},\gamma)=\int_{I} q(\tilde{t}+t,\gamma(t))\,dt =0,
\ee
for all $\tilde{t} \in \R$ and all unit speed maximal geodesics $\gamma$ in $M$. Taking the Fourier transform of $\mathcal{L}(\tilde{t},\gamma)$ with respect to the variable $\tilde{t} \in \R$ and using the fact that $q$ is compactly supported, we deduce that
$$0=\widehat{\mathcal{L}q}(\tau,\gamma)=\int_I\int_\R e^{-i\tau\tilde{t}}q(t+\tilde{t},\gamma(t))\,d\tilde{t}\,dt=\int_I e^{i\tau t}\hat{q}(\tau,\gamma(t))\,dt,$$
where $\hat{q}(\tau,\cdot)$ denotes the Fourier transform of $q$ with respect to the time variable. Evaluating at $\tau=0$, we get
$$ \int_I \hat{q}(0,\gamma(t))\,dt=0,$$
for all geodesics $\gamma$ in $M$. Using Hypothesis~\ref{ginjectivity}, we deduce that $\hat{q}(0,\cdot)=0$. Next, we evaluate all the derivatives of $\widehat{\mathcal{L}}(\tau,\gamma)$ at $\tau=0$:
\bel{qderivatives}
0=[\pd^k_\tau \widehat{\mathcal{L}q}(\tau,\gamma)]|_{\tau=0}=[\pd^k_\tau \int_I e^{i\tau t} \hat{q}(\tau,\gamma(t))\,dt]|_{\tau=0}=\sum_{j=0}^k {{k}\choose{j}}\int_I (it)^{k-j}\pd^j_\tau\hat{q}(0,\gamma(t))\,dt.
\ee
We now use induction on $j$ to show that $\pd^j_\tau \hat{q}(0,\cdot)=0$ for all $j \in \N$. This is true for $j=0$ as shown above. Suppose the induction hypothesis holds for all $j<k$. Then using \eqref{qderivatives} implies that 
$$ \int_I \pd^k_\tau \hat{q}(0,\gamma(t)) \,dt=0.$$
Together with Hypothesis~\ref{ginjectivity}, we conclude that $\pd^k_\tau \hat{q}(0,\cdot)=0$. This completes the proof by induction. Now, since $q(t,\cdot)$ is a compactly supported function in $t$, we know that $\hat{q}(\tau,\cdot)$ is real analytic with respect to $\tau$ and since all the derivatives vanish at $\tau=0$, we conclude that $\hat{q}$ vanishes identically, which implies that $q \equiv 0$.
\end{proof}

It remains to prove the inversion of the light ray transform of a one-form $\mathcal A$ satisfying $\supp \A \subset \mathcal E$ up to the natural gauge. let us begin with some remarks and lemmas. 
We write $\A=b\,dt+\omega$. Similar to the proof above, we have
$$0=\widehat{\mathcal{L}\A}(\tau,\gamma)=\int_\R e^{-i\tau \tilde{t}} \int_I [b(\tilde{t}+t,\gamma(t))+\omega(\tilde{t}+t,\gamma(t))\dot{\gamma}(t)] \,dt \,d\tilde{t},$$
for all maximal geodesics $\gamma \in M$. 
Since $\A$ is compactly supported, we can interchange the order of integration to obtain 
$$0=\widehat{\mathcal{L}\A}(\tau,\gamma)= \int_I (\hat{b}(\tau,\gamma(t))+\hat{\omega}(\tau,\gamma(t))\dot{\gamma}(t))\,dt,$$
where $\hat{b}$ and $\hat{\omega}$ denote the Fourier transform in time of the compactly supported function $b$ and the one-form $\omega$ respectively. Evaluating at $\tau=0$, we obtain
$$ \mathcal{I}_{\gamma}( \hat{b}(0,\gamma) + \hat{\omega}(0,\gamma))=0.$$
Using Hypothesis~\ref{ginjectivity} together with smoothness properties of the Helmholtz decomposition discussed in Section~\ref{mainresult}, we deduce that
\bel{0injectivity}
\hat{b}(0,x)=0, \quad \hat{\omega}(0,x)=d\psi_0(x),
\ee
for some $\psi_0 \in \CI^1(M)$ vanishing on $\pd M$. Note that since $\omega$ is $\CI^1$ smooth, and since $d\psi_0(x)=\hat{\omega}(0,x) \in \CI^1(M;T^*M)$, it follows that
$$ \psi_0 \in \CI^2(M).$$  
We have the following lemma.
\begin{lem}
\label{sequence}
Let $\psi_{-1}:=0$ and $\psi_0$ be as above. There exists a sequence of functions $\{\psi_k\}_{k=1}^{\infty} \subset \CI^2(M)$ all vanishing on $\pd M$ and such that for every $k \geq 0$,
$$ \pd^k_\tau \hat{\omega}(0,x) = d\psi_k(x),\quad \text{and}\quad \pd^k_\tau \hat{b}(0,x)=ik\psi_{k-1}(x).$$
\end{lem}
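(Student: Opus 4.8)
The plan is to prove Lemma~\ref{sequence} by induction on $k$, exploiting the vanishing of $\widehat{\mathcal{L}\A}(\tau,\gamma)$ to all orders in $\tau$ at $\tau=0$, exactly as was done for the scalar potential $q$ in statement (i). The base case $k=0$ is already established in \eqref{0injectivity}: we have $\hat b(0,x)=0=i\cdot 0\cdot\psi_{-1}(x)$ and $\hat\omega(0,x)=d\psi_0(x)$, and the regularity discussion above shows $\psi_0\in\CI^2(M)$. So I would assume the statement holds for all indices $<k$ and prove it for $k$.

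First I would differentiate the identity $0=\widehat{\mathcal{L}\A}(\tau,\gamma)=\int_I(\hat b(\tau,\gamma(t))+\hat\omega(\tau,\gamma(t))\dot\gamma(t))\,dt$ exactly $k$ times in $\tau$ and evaluate at $\tau=0$. There is no explicit $e^{i\tau t}$ factor appearing here (unlike in the scalar case, where the substitution $\tilde t\mapsto \tilde t+t$ produced one after Fourier transform) — wait, in fact the interchange of integration here did produce the analogous structure; re-examining the scalar computation, the key point is that $\widehat{\mathcal{L}q}(\tau,\gamma)=\int_I e^{i\tau t}\hat q(\tau,\gamma(t))\,dt$ carried an $e^{i\tau t}$, so here too we should get $\widehat{\mathcal{L}\A}(\tau,\gamma)=\int_I e^{i\tau t}(\hat b(\tau,\gamma(t))+\hat\omega(\tau,\gamma(t))\dot\gamma(t))\,dt$. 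Differentiating $k$ times and using the Leibniz rule gives
$$0=\sum_{j=0}^k\binom{k}{j}\int_I (it)^{k-j}\big(\pd^j_\tau\hat b(0,\gamma(t))+\pd^j_\tau\hat\omega(0,\gamma(t))\dot\gamma(t)\big)\,dt.$$
For $j<k$ the inductive hypothesis tells us $\pd^j_\tau\hat b(0,\cdot)=ij\psi_{j-1}$ and $\pd^j_\tau\hat\omega(0,\cdot)=d\psi_j$, so each such term is $\int_I (it)^{k-j}\binom{k}{j}\big(ij\,\psi_{j-1}(\gamma(t))+d\psi_j(\gamma(t))\dot\gamma(t)\big)\,dt$. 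The crucial observation is that $d\psi_j(\gamma(t))\dot\gamma(t)=\frac{d}{dt}\big(\psi_j(\gamma(t))\big)$, and more generally $(it)^{k-j}\frac{d}{dt}\psi_j(\gamma(t))$ can be integrated by parts; since $\psi_j$ vanishes on $\pd M$ and $\gamma$ is a maximal geodesic with endpoints on $\pd M$, the boundary terms vanish and we are left with $-\int_I \frac{d}{dt}\big((it)^{k-j}\big)\psi_j(\gamma(t))\,dt=-i(k-j)\int_I (it)^{k-j-1}\psi_j(\gamma(t))\,dt$. Combining this with the $ij\,\psi_{j-1}$ contribution and carefully matching the combinatorial coefficients against the remaining $j=k$ term $\int_I(\pd^k_\tau\hat b(0,\gamma(t))+\pd^k_\tau\hat\omega(0,\gamma(t))\dot\gamma(t))\,dt$, all the lower-order pieces should telescope/cancel, leaving precisely
$$\int_I\big((\pd^k_\tau\hat b(0,\gamma(t))-ik\psi_{k-1}(\gamma(t)))+\pd^k_\tau\hat\omega(0,\gamma(t))\dot\gamma(t)\big)\,dt=0$$
for all maximal geodesics $\gamma$. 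Then Hypothesis~\ref{ginjectivity} applied to the pair $(\pd^k_\tau\hat b(0,\cdot)-ik\psi_{k-1},\,\pd^k_\tau\hat\omega(0,\cdot))$ yields $\pd^k_\tau\hat b(0,x)-ik\psi_{k-1}(x)=0$ and $(\pd^k_\tau\hat\omega(0,\cdot))^s=0$, i.e.\ $\pd^k_\tau\hat\omega(0,x)=d\psi_k(x)$ for the potential $\psi_k\in H^1_0(M)$ from the Helmholtz decomposition; the elliptic regularity remark in Section~\ref{mainresult} upgrades $\psi_k$ to $\CI^2(M)$ since $d\psi_k=\pd^k_\tau\hat\omega(0,\cdot)\in\CI^1(M;T^*M)$ (here one uses that $\omega\in\CI^1$ in time as well, so its time derivatives of the Fourier transform are still $\CI^1$ in $x$ — this needs a small justification from the compact support and smoothness of $\A$). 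This closes the induction.

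The main obstacle I anticipate is the bookkeeping in the integration-by-parts/telescoping step: one must verify that after substituting the inductive hypotheses the combinatorial sum over $j<k$ collapses exactly so that no spurious terms survive, and in particular that the $\psi_{k-1}$ term emerges with precisely the coefficient $ik$ claimed in the lemma. A clean way to organize this is to observe that the function $\Psi(\tau,x):=\sum_{k\geq 0}\frac{\tau^k}{k!}\psi_k(x)$ (a formal power series, or genuine analytic function since $\hat\omega$ is analytic in $\tau$) should satisfy $\hat\omega(\tau,x)=d_x\Psi(\tau,x)$ and $\hat b(\tau,x)=i\pd_\tau\Psi(\tau,x)$, turning the whole family of identities into the single relation $\widehat{\mathcal L\A}(\tau,\gamma)=\int_I e^{i\tau t}\frac{d}{dt}\big(e^{i\tau t}$-corrected$\big)\dots$ — more precisely the integrand $e^{i\tau t}(i\pd_\tau\Psi+d_x\Psi\cdot\dot\gamma)(\tau,\gamma(t))=\frac{d}{dt}\big(e^{i\tau t}\Psi(\tau,\gamma(t))\big)$ is an exact derivative, whose integral over $I$ vanishes because $\Psi$ vanishes on $\pd M$. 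Deriving the lemma is then just reading off Taylor coefficients of this generating-function identity, but since the lemma is stated coefficient-by-coefficient I would present the induction directly and use the generating function only as a guide.
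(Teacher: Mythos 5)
Your proposal is correct and follows essentially the same route as the paper: induction on $k$, the Leibniz expansion with the $(it)^{k-j}$ factors, integration by parts in $t$ using $d\psi_j(\gamma(t))\dot\gamma(t)=\frac{d}{dt}\psi_j(\gamma(t))$ and $\psi_j|_{\pd M}=0$ so that the lower-order terms collapse to $ik\int_I\psi_{k-1}(\gamma(t))\,dt$, followed by Hypothesis~\ref{ginjectivity} with the Helmholtz decomposition and elliptic regularity to produce $\psi_k\in\CI^2(M)$ vanishing on $\pd M$. The only slip is in your heuristic aside: the generating-function relation should be $\hat b(\tau,x)=i\tau\Psi(\tau,x)$ (precisely the identity $\hat b=i\tau\hat\psi$ the paper uses later), not $\hat b=i\pd_\tau\Psi$, but since you invoke it only as a guide this does not affect the argument.
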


\begin{proof}
We proceed by induction. The claim is true for $k=0$ as discussed above. Suppose it holds for all $m < k$. For $m=k$, we have:
$$\pd^{k}_{\tau}\widehat{\mathcal{L}\A}(0,\gamma)=\sum_{j=0}^k {{k}\choose{j}}\int_I (it)^{k-j}[\pd^{j}_\tau\hat{b}(0,\gamma(t))+\pd^{j}_\tau\hat{\omega}(0,\gamma(t))\dot{\gamma}(t)]\,dt=0.$$
Now using the induction hypothesis together we can simplify this expression to obtain
$$\int_{I}[\pd^k_\tau\hat{b}(0,\gamma(t))+\pd^k_\tau\hat{\omega}(0,\gamma(t))\dot{\gamma}(t)]\,dt=-\sum_{j=0}^{k-1}{{k}\choose{j}}(\int_I (it)^{k-j} [(ij)\psi_{j-1}+ d\psi_j\dot{\gamma}(t)]\,dt).$$
Now using the fact that $\int_{I} t^{k-j}d\psi_j\dot{\gamma}(t)\,dt=-\int_{I} (k-j)t^{k-j-1}\psi_j \,dt$, we can simplify the right hand side of the above equation to
\[
\begin{aligned}
&-\sum_{j=0}^{k-1} {{k}\choose{j}} (\int_I [(it)^{k-j} (ij) \psi_{j-1}-(it)^{k-j-1}i(k-j)\psi_j]\,dt)\\
&=-\sum_{j=0}^{k-1} {{k}\choose{j}}(\int_I [(it)^{k-j}(ij)\psi_{j-1}]\,dt)+\sum_{j=1}^{k} {{k}\choose{j}}(\int_I[(ij)(it)^{k-j}\psi_{j-1}]\,dt)\\
&=ik\int_I \psi_{k-1}(\gamma(t))\,dt.
\end{aligned}
\]
Hence we have
$$\int_{I}[\pd^k_\tau\hat{b}(0,\gamma(t))-ik\psi_{k-1}(\gamma(t))+\pd^k_\tau\hat{\omega}(0,\gamma(t))\dot{\gamma}(t)]\,dt=0.$$
Using Hypothesis~\ref{ginjectivity} together with smoothness properties of the Helmholtz decomposition in Section~\ref{mainresult}, we conclude that there exists $\psi_k \in \CI^1(M)$ vanishing on $\pd M$ such that
$$ \pd^k_\tau \hat{\omega}(0,x) = d\psi_k(x),\quad \text{and}\quad \pd^k_\tau \hat{b}(0,x)=ik\psi_{k-1}(x).$$
Since $d\psi_k= \pd^k_\tau \hat{\omega}(0,x) \in \CI^1(M;T^*M)$ and $\psi_k \in \CI^1(M)$, we conclude that
$$ \psi_k \in \CI^2(M).$$ 
This completes the induction argument.
\end{proof}

\begin{proof}[Proof of statement (ii) in Proposition~\ref{introlightray}]
\noindent Let us define
\bel{gaugedef}
\psi(t,x):=\int_0^t b(s,x) \,ds=\int_{-\infty}^{t} b(s,x) \,ds,
\ee
where we are using the fact that $\supp{\A} \subset \M$. Note that since $\A$ has compact support, equation \eqref{0injectivity} implies 
\bel{}
\psi(t,x)=\psi(T,x)=\int_{0}^T b(s,x) \, ds= \hat{b}(0,x)=0\quad \text{for $t \geq T$.}
\ee
Similarly, $\psi(t,x)=0$ for $t\leq 0$. Again, we let $\hat{\psi}(\tau,x)$ denote the Fourier transform of $\psi$ in the time variable. 
Note that since $\psi$ is compactly supported in time, $\hat{\psi}(\tau,x)$ is analytic with respect to $\tau$. Let us define the coefficients $\{\tilde{\psi}_k\}_{k=0}^{\infty}$ through
$$ \hat{\psi}(\tau,x)= \sum_{k=0}^{\infty} \frac{\tilde{\psi}_k(x)}{k!}\tau^k.$$
We claim that $ \tilde{\psi}_k(x) = \psi_k(x)$ holds for all $k \geq 0$ and all $x \in M$. To see this note that by definition $\pd_t \psi = b$. Hence $i\tau \hat{\psi}(\tau)=\hat{b}(\tau)$. Now differentiating this expression $k+1$ times and evaluating at $\tau=0$ we deduce that
$$i\tilde{\psi}_{k}=i \hat{\psi}^{(k)}(0)=i \frac{1}{k+1}\hat{b}^{(k+1)}(0)=i\psi_{k},$$
where we used Lemma~\ref{sequence} in the last step.  Thus, we have
$$ \hat{\psi}(\tau,x)=\sum_{k=0}^\infty \frac{\psi_k(x)}{k!}\tau^k.$$
Since $\omega(t,x)$ is also compactly supported in $t$, Lemma~\ref{sequence} implies that
\bel{infiniteseries}
\hat{\omega}(\tau,x)=\sum_{k=0}^\infty \frac{\pd_\tau^{k} \hat{\omega}(0,x)}{k!} \tau^k=\sum_{k=0}^\infty \frac{d\psi_k(x)}{k!} \tau^k.
\ee
Note that for every fixed $x \in M$, the following estimate holds:
$$ |\pd_\tau^k \hat{\omega}(0,x)| \leq \|\pd_\tau^k \hat{\omega}(\tau,x)\|_{L^{\infty}(\R)} \lesssim \int_0^T |t|^k |\omega(t,x)|\,dt,$$
which implies that the infinite series \eqref{infiniteseries} is uniformly convergent with respect to $x \in M$, and therefore we can write
$$\hat{\omega}(\tau,x)=\sum_{k=0}^\infty \frac{d\psi_k(x)}{k!} \tau^k=d(\sum_{k=0}^\infty \frac{\psi_k(x)}{k!} \tau^k)=d\hat{\psi}(\tau,x).$$
Hence, $\omega = d\psi$, and subsequently
\bel{}
\A = b \,dt + \omega = (\pd_t \psi) \,dt + d\psi = \bar{d}\psi.
\ee
Note that $\psi|_{(0,T)\times \pd M}=0$ as $\psi_k|_{\pd M}=0$ for all $k\geq 0$. We will now show that $\psi \in \CI^2(\M)$. Indeed, it is clear from equation \eqref{gaugedef} that
$\psi \in \CI^1(\M)$. But then since $\bar{d}\psi=\A \in \CI^1(\M;T^*\M)$ we may conclude that $\psi \in \CI^2(\M)$. 
\end{proof}

\subsection{Remarks}
With the proof of Theorem~\ref{t1} complete, let us state a few remarks. We start with the auxiliary condition that the coefficients $\A,q$ should be known on the set $\M \setminus \mathcal{E}$. This is merely an artifact of the light ray inversion method, as the Gaussian beam construction shows that the light ray transforms of $\A$ and $q$ can be obtained on the set $\mathcal D$. As the ratio $D_g(M)/T$ grows, the set $\mathcal E$ grows and the coefficients are therefore recovered in a large set that is closer in size to the optimal set $\mathcal D$. The assumption that $D_g(M)<\infty$ in particular implies that the manifold $(M,g)$ should be non-trapping. To illustrate this remark, consider $M=[0,1] \times S^1$ with $S^1$ denoting the unit circle and note that in this case $D_g(M)=\infty$. 


Regarding the smoothness of the metric $g$, note that we only require the metric in Fermi coordinates to be $\CI^4$ smooth. This however, can only be guaranteed if the metric is a priori known to be $\CI^6$ smooth, as there could be some loss of regularity in the angular directions of the exponential map of a Riemannian manifold. One can in fact use a mollification of the phase function $\phi$ to improve the result to $g \in \CI^4(M;\Sym^2M)$. The key here is that there is no loss of regularity in the direction tangent to the null geodesic in Fermi coordinates. We believe that the result could be extended to $g \in \CI^2(M;\Sym^2M)$, but this will require mollification of the metric, $g_\rho$. Improvements beyond the $\mathcal C^2$-smoothness for the metric $g$ could be much harder.

\section*{Acknowledgments}
A.F was supported by EPSRC grant EP/P01593X/1, J.I. was supported by the Academy of Finland (decision 295853), Y.K. was partially supported by the Agence Nationale de la Recherche grant ANR-17-CE40-0029 and L.O was supported by the EPSRC grants EP/P01593X/1 and EP/R002207/1.

\end{document}